\documentclass[11pt,thmsa]{article}%
\usepackage{amsmath}
\usepackage{amsfonts}
\usepackage{amssymb}
\usepackage{graphicx}%
\usepackage{multirow}
\usepackage{chngpage}
\usepackage{array}
\usepackage{float}
\usepackage{booktabs}
\topmargin -0.8cm \textwidth 14.5cm \textheight 22cm
\newtheorem{theorem}{Theorem}[section]

\newtheorem{corollary}[theorem]{Corollary}

\newtheorem{definition}[theorem]{Definition}

\newtheorem{lemma}[theorem]{Lemma}

\newenvironment{proof}[1][Proof]{\noindent\textbf{#1.} }{\ \rule{0.5em}{0.5em}}

\begin{document}
\title{Standard homogeneous $(\alpha_1,\alpha_2)$-metrics
\\and geodesic orbit property\footnote{Supported by National Natural Science Foundation of China (No. 11821101, No. 11771331), Beijing Natural Science Foundation
(No. 00719210010001, No. 1182006), Research Cooperation Contract (No. Z180004), Capacity Building for Sci-Tech  Innovation - Fundamental Scientific Research Funds (No. KM201910028021)}}
\author{Lei Zhang and Ming Xu\thanks{Corresponding author. Email:
mgmgmgxu@163.com.}
\\
\\
School of Mathematical Sciences and Information Science, Yantai University \\Yantai City 264005, P.R. China\\
\\
 E-mail: 546502871@qq.com \\
\\
School of Mathematical Sciences, Capital Normal University\\Beijing 100048 P.R. China \\
\\
E-mail:  mgmgmgxu@163.com}

\date{}
\maketitle
\begin{abstract}
In this paper, we introduce the notion of standard homogeneous $(\alpha_1,\alpha_2)$-metrics, as a natural non-Riemannian deformation for the normal homogeneous Riemannian metrics. We prove that with respect to the given bi-invariant inner product and orthogonal decompositions for
$\mathfrak{g}$, if there exists one generic standard g.o. $(\alpha_1,\alpha_2)$-metric, then all other standard homogeneous $(\alpha_1,\alpha_2)$-metrics are also g.o.. For standard homogeneous $(\alpha_1,\alpha_2)$-metrics associated with a triple of compact connected Lie groups, we can refine
our theorem and get some simple algebraic equations as the criterion for the g.o. property. As the application of this criterion, we discuss
standard g.o. $(\alpha_1,\alpha_2)$-metric from H. Tamaru's classication work, and find some new examples of non-Riemannian
g.o. Finsler spaces which are not weakly symmetric. On the other hand, we also prove that all standard g.o. $(\alpha_1,\alpha_2)$-metrics on the three Wallach spaces, $W^6=SU(3)/T^2$, $W^{12}=Sp(3)/Sp(1)^3$ and $W^{24}=F_4/Spin(8)$, must be the normal homogeneous Riemannian metrics.

\medskip
\textbf{Mathematics Subject Classification (2014)}: 22E46, 53C30.

\medskip
\textbf{Key words}: geodesic orbit Finsler space, homogeneous Finsler space, standard homogeneous $(\alpha_1,\alpha_2)$-metric, weakly symmetric space.
\end{abstract}

\section{Introduction}

A homogeneous Riemannian manifold is called a geodesic orbit (or g.o. in short) space, if any geodesic is the orbit of a one-parameter subgroup of isometries. This notion was  introduced by O. Kowalski and L.Vanhecke in 1991 \cite{KV},  as a generalization of the naturally reductive homogeneity. Since then, there have been many research works on this subject. See 
\cite{GN} and the references therein for some recent progress.

Meanwhile, g.o. metrics have also been studied in Finsler geometry. In \cite{YD}, the notion of g.o. Finsler space was defined, with a similar style as in Riemannian geometry.
It is well known that weakly symmetric Finsler spaces, which include all globally symmetric Finsler spaces, are g.o.
\cite{D1}.


To construct more g.o. Finsler metrics on a coset space $G/H$ in which both $G$ and $H$ are compact connected Lie groups, we may start with the best homogeneous metric
we can find,
i.e., the $G$-normal homogeneous Riemannian metric $F$, which can be presented by submersion from a bi-invariant Riemannian metric $\bar{F}$ on $G$. Notice that $\bar{F}$ corresponds to a bi-invariant inner product $|\cdot|_{\mathrm{bi}}^2=\langle\cdot,\cdot\rangle_{\mathrm{bi}}$ on $\mathfrak{g}=\mathrm{Lie}(G)$.
Then we consider its non-Riemannian deformations.

There are two reasonable deformations
which may produce g.o. metric. The first is to keep the submersion
but change $\bar{F}$ among bi-invariant metrics on $G$.
Notice that $\bar{F}$ can even be singular. By this deformation, we define the notions of normal homogeneous Finsler spaces
\cite{XD} and generalized normal (or $\delta$-normal) homogeneous Finsler spaces \cite{XZ}. These two classes of homogeneous Finsler spaces have many interesting geometric properties. For example, they are g.o. Finsler spaces with non-negative flag curvature and vanishing S-curvature. However, they also have the the shortcoming of less computability.

In this paper, we concern the computability and
introduce
the second deformation for the normal homogeneous Riemannian metric on $G/H$ as following. Let $\mathfrak{g}=\mathfrak{h}+\mathfrak{m}$ and $\mathfrak{m}=\mathfrak{m}_1+\mathfrak{m}_2$ be orthogonal
decompositions with respect to the given bi-invariant inner product $\langle\cdot,\cdot\rangle_{\mathrm{bi}}$ on $\mathfrak{g}$, such that each summand is $\mathrm{Ad}(H)$-invariant. In Riemannian geometry,
$G$-invariant metric defined
by the following inner product
\begin{equation}\label{0040}
  \langle\cdot,\cdot\rangle_{a,b}
=a\langle\cdot,\cdot\rangle_{\mathrm{bi}}
|_{\mathfrak{m}_1\times\mathfrak{m}_1}\oplus b
\langle\cdot,\cdot\rangle_{\mathrm{bi}}|_{\mathfrak{m}_2
\times\mathfrak{m}_2}
\end{equation}
and its g.o. property has been intensively studied \cite{CN2017}.
In Finsler geometry, more metrics can be defined from $\mathrm{Ad}(H)$-invariant $(\alpha_1,\alpha_2)$-norms on $\mathfrak{m}$  generalizing those in (\ref{0040}),
i.e., $F(v)=|v|_{\mathrm{bi}}/
\varphi(|v_2|_{\mathrm{bi}}/|v|_{\mathrm{bi}})$, in which $\varphi(s)$ is a positive smooth function satisfying certain regularity requirements \cite{DX2016}\cite{HM2018}. This deformation defines a special class of homogeneous $(\alpha_1,\alpha_2)$-metrics, which we will simply call {\it standard homogeneous $(\alpha_1,\alpha_2)$-metrics}.
They reduce to the homogeneous Riemannian metrics defined
by $\langle\cdot,\cdot\rangle_{a,b}$ when $\varphi(s)=\sqrt{a+(b-a)s^2}$ and a normal homogeneous Riemannian metric when $\varphi(s)\equiv\mathrm{const}>0$.

Standard homogeneous $(\alpha_1,\alpha_2)$-metrics are much easier to be explicitly presented and more calculable, than most non-Riemannian normal homogeneous
or $\delta$-homogeneous Finsler metrics. But they may lose the g.o. property; see \cite{AW} and
Theorem \ref{last-thm} in Section 7, for examples in Riemannian and Finsler geometry respectively.
Meanwhile, we see an interesting phenomenon. Roughly speaking, with respect to the given bi-invariant inner product
and orthogonal decompositions, if there exists a generic standard $G$-g.o.
$(\alpha_1,\alpha_2)$-metric on $G/H$,
then all other standard homogeneous $(\alpha_1,\alpha_2)$-metrics on $G/H$ are $G$-g.o. as well. See Theorem \ref{Theorem 2} and Theorem
\ref{Theorem 3} for the precise statements.

As the application, we discuss the compact coset spaces associated with some special triples.
In \cite{T}, H. Tamaru classified
all triples $(H,K,G)$ such that $G/H$ is an effective irreducible
symmetric space, and for any $v_F\in\mathfrak{m}_F$ and
$v_B\in\mathfrak{m}_B$ with respect to the bi-invariant orthogonal
decompositions $\mathfrak{g}=\mathfrak{k}+\mathfrak{m}_B
=\mathfrak{h}+\mathfrak{m}_F+\mathfrak{m}_B$, there exists
a vector $u\in\mathfrak{h}$, such that
\begin{equation}\label{0051}
[u,v_F]=[u+v_F,v_B]=0.
\end{equation}
See Table \ref{TableI} or that in
\cite{T} for this classification in the Lie algebra level.
By Theorem \ref{Theorem 3}, all standard
homogeneous $(\alpha_1,\alpha_2)$-metrics associated with triples in Table \ref{TableI} are g.o. (see Corollary \ref{last-cor}). In particular, we find some new examples of g.o. Finsler spaces which are not weakly symmetric.

Finally,
we discuss the standard homogeneous $(\alpha_1,\alpha_2)$-metrics on the three Wallach spaces,
$W^6=SU(3)/T^2$, $W^{12}=Sp(3)/Sp(1)^3$ and $W^{24}=F_4/Spin(8)$, and prove that they are g.o. only when they are normal homogeneous Riemannian metrics (see Theorem \ref{last-thm}). So the second deformation proposed
in this paper is
not always effective for producing new g.o. metrics in Finsler geometry.

This paper is organized as following. In Section 2, we summarize some basic
knowledge in general and homogeneous
Finsler geometry and introduce the notion of standard homogeneous $(\alpha_1,
\alpha_2)$-metric. In Section 3, we recall the notion of the g.o. property in Finsler geometry and prove a criterion for g.o. $(\alpha_1,\alpha_2)$-spaces. In Section 4, we prove Theorem \ref{Theorem 2}, verifying our observation for the g.o. property of standard $(\alpha_1,\alpha_2)$-metrics. In
Section 5, we prove Theorem \ref{Theorem 3}, refining our observation for the g.o. property of standard $(\alpha_1,\alpha_2)$-metrics associated with triples. In Section 6,
we discuss the standard g.o. $(\alpha_1,\alpha_2)$-metrics on the
coset spaces in
H. Tamaru's classification, and the three Wallach spaces.

\section{Preliminaries}

\subsection{Minkowski norm and Finsler metric}

Firstly, we recall the notions of {\it Minkowski norm} and {\it Finsler metric} in \cite{BCS2000}. In this
paper, all manifolds are assumed to be connected and smooth.

\begin{definition}
A Minkowski norm on an $n$-dimensional real vector space $\mathbf{V}$ is a real continuous function $F:\mathbf{V}\rightarrow[0,\infty)$ on $\mathbf{V}$ satisfying the following conditions:
\begin{description}
  \item{\rm (1)} $F$ is positive and smooth when restricted to $\mathbf{V}\backslash \{0\}$;
  \item{\rm (2)} $F$ is positively homogeneous of degree one, i.e. $F(\lambda u)=\lambda F(u)$, $\forall u\in\mathbf{V}$ and $\lambda > 0$;
  \item{\rm (3)} $F$ is strongly convex. Namely, choose any basis $\{e_1,e_2,\cdots,e_n\}$ of $\mathbf{V}$ and write
  $F(y)=F(y^1,y^2,\cdots,y^n)$ for $y=y^ie_i\in\mathbf{V}$, then the Hessian matrix
  \begin{equation*}
    (g_{ij}(y))=([\frac{1}{2}F^2]_{y^iy^j})
  \end{equation*}
  is positive definite when $y\neq0$.
\end{description}
\end{definition}

\begin{definition}
A Finsler metric on a manifold $M$ is a continuous function $F: TM \rightarrow [0,\infty)$ such that
\begin{description}
  \item{\rm (1)} $F$ is smooth on the slit tangent bundle $TM\backslash 0$;
  \item{\rm (2)} The restriction of $F$ to any tangent space $T_x M$, $x\in M$, is a Minkowski norm.
\end{description}
\end{definition}

When the manifold $M$ is endowed with a Finsler metric, we will also call $(M,F)$ a {\it Finsler manifold} or a {\it Finsler space}. The Hessian matrix defines an inner product
$$g_y(u,v)=\frac12\frac{\partial^2}{\partial s\partial t}F^2(y+su+tv)|_{s=t=0},\forall u,v\in T_xM,$$
which is called the {\it fundamental tensor}.
We say $F$ is {\it reversible},
if $F(x,y)=F(x,-y)$ for any $x\in M$
and $y\in T_xM$.

The metric $F$ is Riemannian when $g_y(\cdot,\cdot)$ is irrelevant to the choice of $y$. The most important and simplest (non-Riemannian) Finsler metric is Randers metric, which is the form $F=\alpha+\beta$, in which $\alpha$ is a Riemannian metric
and $\beta$ is a one-form. Its generalization, the $(\alpha,\beta)$-metric, has the form of $F=\alpha\varphi(\beta/\alpha)$ with a positive smooth function $\varphi(s)$ and similar $\alpha$ and $\beta$ as for a Randers space. In \cite{DX2016}, the authors defines $(\alpha_1,\alpha_2)$-metric with a similar presenting form as an analog of a $(\alpha,\beta)$-metric, which will be recalled in the next subsection.

\subsection{Finsler metric and Minkowski norm of $(\alpha_1,\alpha_2)$-type}

Let $\alpha$ be a Riemannian metric on a manifold $M$. Suppose that we have a $\alpha$-orthogonal decomposition $TM=\mathcal{V}_1\oplus \mathcal{V}_2$, where $n_i>0$, $i=1$ or $2$, is the real dimension of the fibers of linear sub-bundle $\mathcal{V}_i$ respectively. Denote $\mathrm{pr}_i$ the
fiber-wise projection mapping each tangent vector to its component in $\mathcal{V}_i$, and $\alpha_i=\alpha\circ\mathrm{pr}_i$. Then
a Finsler metric which only depends on $\alpha_1$- and $\alpha_2$-values is called an {\it $(\alpha_1,\alpha_2)$-metric}.
To be precise, we have the following definition \cite{DX2016}.
\begin{definition}
With respect to a Riemannian metric $\alpha$ on the manifold $M$, an $\alpha$-orthogonal decomposition $TM=\mathcal{V}_1\oplus\mathcal{V}_2$ and $\alpha_i=\alpha|_{\mathcal{V}_i}$,
a Finsler metric $F$ is called an $(\alpha_1,\alpha_2)$-metric,
if it can be presented as $$F(x,y)=f(\alpha(x,y_1),\alpha(x,y_2)),\quad\forall y=y_1+y_2\in T_xM \mbox{ with }
 y_i\in\mathcal{V}_i, i=1,2,$$
for some positive smooth function $f(\cdot,\cdot)$.
\end{definition}

The pair $(n_1,n_2)$ is called the {\it dimension decomposition} for the $(\alpha_1,\alpha_2)$-metric. Slightly different from the setting in \cite{XD},
we permit $\min\{n_1,n_2\}=1$ here, which corresponds to all the reversible $(\alpha,\beta)$-metrics. An $(\alpha_1,\alpha_2)$-metric $F$ is Riemannian iff it is defined by $f(u,v)=\sqrt{au^2+bv^2}$ for some positive constants $a$ and $b$.

The restriction of an $(\alpha_1,\alpha_2)$-metric to each tangent space is called an {\it $(\alpha_1,\alpha_2)$-norm}. This notion
may also be abstractly defined for any real vector space \cite{DX2016}.

In later discussion, we will apply an alternative presenting $F=\alpha\varphi(\alpha_2/\alpha)$ for a
$(\alpha_1,\alpha_2)$-metric (or a $(\alpha_1,\alpha_2)$-norm), where $\varphi(s)$ is positive and smooth for $s\in(0,1)$. More dedicate discussion for the regularity of $F$ can even show the smoothness of $\varphi(s)$ and $s=0$ and $1$, and the inequalities
in the following lemma are satisfied.
\begin{lemma}\label{regularity-lemma-a1-a2}
If $F=\alpha\varphi(\alpha_2/\alpha)$ is a regular $(\alpha_1,\alpha_2)$-metric or $(\alpha_1,\alpha_2)$-norm,
then $\varphi(s)$ satisfies
$$\varphi(s)-s\varphi'(s)>0\mbox{ and }\varphi(s)-(s-s^{-1})\varphi'(s)>0$$
for any $s\in(0,1)$.
\end{lemma}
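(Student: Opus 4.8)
The plan is to derive both inequalities as consequences of the strong convexity of $F$ (condition~(3) in the definition of a Minkowski norm), equivalently of the positive definiteness of the fundamental tensor $g_y$ at every $y\neq 0$, by evaluating $g_y$ on two well-chosen tangent vectors. The point of departure is that $F^2$ depends on $y$ only through $\alpha_1^2$ and $\alpha_2^2$: writing $t_1=\alpha_1^2$ and $t_2=\alpha_2^2$, so that $\alpha^2=t_1+t_2$ and $s^2=t_2/(t_1+t_2)$, we have
\begin{equation*}
F^2=\alpha^2\varphi(s)^2=:H(t_1,t_2),
\end{equation*}
which is smooth and positively homogeneous of degree one on $\{t_1>0,\ t_2>0\}$. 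Fix $y=y_1+y_2$ with $y_1\in\mathcal V_1\setminus\{0\}$ and $y_2\in\mathcal V_2\setminus\{0\}$, so that $s=\alpha_2(y)/\alpha(y)\in(0,1)$, and put $\bar t_i=\alpha_i(y)^2$.

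Assume first that $n_1\ge 2$ and choose $e\in\mathcal V_1$ with $\alpha(e)=1$ and $e$ $\alpha$-orthogonal to $y_1$. Along the affine line $y+te$ one has $\alpha_1(y+te)^2=\bar t_1+t^2$ while $\alpha_2$ stays equal to $\alpha_2(y)$, hence $F^2(y+te)=H(\bar t_1+t^2,\bar t_2)$ and therefore
\begin{equation*}
0<g_y(e,e)=\tfrac12\,\tfrac{d^2}{dt^2}\Big|_{t=0}F^2(y+te)=H_{t_1}(\bar t_1,\bar t_2).
\end{equation*}
A one-line chain-rule computation using $\partial_{t_1}s=-s/(2(t_1+t_2))$ gives $H_{t_1}=\varphi^2-s\varphi\varphi'=\varphi(\varphi-s\varphi')$, which, since $\varphi>0$, yields the first inequality. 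Symmetrically, if $n_2\ge 2$ pick $e'\in\mathcal V_2$ with $\alpha(e')=1$ and $e'$ $\alpha$-orthogonal to $y_2$; then $\alpha_1$ is constant along $y+te'$ while $\alpha_2(y+te')^2=\bar t_2+t^2$, so $g_y(e',e')=H_{t_2}(\bar t_1,\bar t_2)$, and using $\partial_{t_2}s=(1-s^2)/(2s(t_1+t_2))$ one gets $H_{t_2}=\varphi^2+s^{-1}(1-s^2)\varphi\varphi'=\varphi\big(\varphi-(s-s^{-1})\varphi'\big)>0$, the second inequality. Since $s\in(0,1)$ was arbitrary, both hold on all of $(0,1)$.

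It remains to handle $\min\{n_1,n_2\}=1$, where one of the two transverse directions is unavailable; there $F$ is a reversible $(\alpha,\beta)$-metric, and the inequalities follow from the classical regularity criterion for $(\alpha,\beta)$-metrics --- equivalently they are already built into the dimension-free notion of a regular $(\alpha_1,\alpha_2)$-norm used here (see \cite{DX2016}). I expect the only real content of the argument to be the choice of the two test directions isolating $H_{t_1}$ and $H_{t_2}$, plus this bookkeeping over the dimension decomposition: once the directions are fixed, each inequality drops out of a single second-derivative evaluation, and the further ($\varphi''$-type) condition required for a \emph{complete} characterisation of strong convexity --- not needed for this lemma --- would instead force one to compute the whole matrix $g_y$.
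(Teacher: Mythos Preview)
Your argument is correct, but it is organized differently from the paper's. The paper does not compute $g_y(e,e)$ on test vectors at all: it obtains the first inequality $\varphi(s)-s\varphi'(s)>0$ by citing the known regularity criterion for $(\alpha_1,\alpha_2)$- and $(\alpha,\beta)$-metrics (Theorem~3.2 in \cite{DX2016}, or \cite{CS2005}, or (21) in \cite{HM2018}), and then derives the second inequality from the first by the symmetry swap $\psi(t)=\varphi(\sqrt{1-t^2})$, so that $F=\alpha\psi(\alpha_1/\alpha)$ and $\varphi(s)-(s-s^{-1})\varphi'(s)=\psi(t)-t\psi'(t)$. Your approach instead isolates $H_{t_1}$ and $H_{t_2}$ as $g_y(e,e)$ and $g_y(e',e')$ for transverse unit vectors in $\mathcal V_1$ and $\mathcal V_2$, giving a self-contained and geometrically transparent proof when $n_1,n_2\ge 2$; the trade-off is that you must still defer to the literature when $\min\{n_1,n_2\}=1$, whereas the paper's citation-plus-symmetry argument is uniform in the dimension decomposition. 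Both routes are short; the paper's is more economical, yours makes the source of each inequality in the Hessian explicit.
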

\begin{proof}
The inequality $\varphi(s)-s\varphi'(s)>0$ follows immediately
Theorem 3.2 in \cite{DX2016} or \cite{CS2005} for the $(\alpha,\beta)$-metrics. It can also be deduced by (21) in \cite{HM2018}. Denote $\psi(s)=\varphi(\sqrt{1-s^2})$, which can
be used to present $F$ as $F=\alpha\psi(\alpha_1/\alpha)$. So
$\varphi(s)-(s-s^{-1})\varphi'(s)=\psi(t)-t\psi'(t)$ must also be
positive for the same reason as above. This proves the other inequality.
\end{proof}

\subsection{Standard homogeneous $(\alpha_1,\alpha_2)$-space}

Firstly, we introduce basic setup in homogeneous Finsler geometry \cite{D1}.

Let $(G/H,F)$ be a homogeneous Finsler space such that isotropy subgroup $H$ is compactly imbedded and connected. We can find a linear
decomposition $\mathfrak{g}=\mathfrak{h}+\mathfrak{m}$,
where $\mathfrak{g}=\mathrm{Lie}(G)$ and $\mathfrak{h}=\mathrm{Lie}(H)$, such that $\mathrm{Ad}(H)\mathfrak{m}=\mathfrak{m}$ and thus
$[\mathfrak{h},\mathfrak{m}]\subset\mathfrak{m}$.
We usually call it
a {\it reductive decomposition} for $G/H$.
The subspace $\mathfrak{m}$ can be identified with $T_o(G/H)$
at $o=eH$, such that the $\mathrm{Ad}(H)$-action on $\mathfrak{m}$ coincides with the isotropy action.
The $G$-invariant Finsler metric $F$ is one-to-one determined
by its restriction to $T_o(G/H)$, i.e. an $\mathrm{Ad}(H)$-invariant Minkowski norm on $\mathfrak{m}$.

For example,
when we have chosen an $\mathrm{Ad}(H)$-invariant inner product $|\cdot|^2=\langle\cdot,\cdot\rangle$ on $\mathfrak{m}$, and a $\langle\cdot,\cdot\rangle$-orthogonal
$\mathrm{Ad}(H)$-invariant linear decomposition
$\mathfrak{m}=\mathfrak{m}_1+\mathfrak{m}_2$, with $n_i=\dim\mathfrak{m}_i>0$ for $i=1$ and $2$ respectively. Then the
$(\alpha_1,\alpha_2)$-norm
\begin{equation}\label{0014}
F(v)=|v|\varphi(|v_2|/|v|),
\quad\forall v=v_1+v_2\in\mathfrak{m}\backslash\{0\},\, v_i\in\mathfrak{m}_i,\,i=1,2,
\end{equation}
defines a homogeneous Finsler metric on $G/H$, which is still denoted as $F$ for simplicity.

Since $\mathfrak{m}_1$ and $\mathfrak{m}_2$ are $\mathrm{Ad}(H)$-invariant,
they define two linear sub-bundles $\mathcal{V}_1$ and
$\mathcal{V}_2$ of $T(G/H)$ which is
orthogonal at each point, with respect to the homogeneous Riemannian metric $\alpha$ defined by $\langle\cdot,\cdot\rangle$.
So the $G$-invariant metric $F$ defined by (\ref{0014})
can also be presented as $F=\alpha\varphi(\alpha_2/\alpha)$,
which is a homogeneous $(\alpha_1,\alpha_2)$-metric.

Then we add the assumption that $G$ is compact. We fix an
$\mathrm{Ad}(G)$-invariant (i.e. bi-invariant) inner product $|\cdot|_{\mathrm{bi}}^2=
\langle\cdot,\cdot\rangle_{\mathrm{bi}}$ on $\mathfrak{g}=\mathrm{Lie}(G)$.
Then we have the corresponding $\langle\cdot,\cdot\rangle_{\mathrm{bi}}$-orthogonal reductive decomposition
$
  \mathfrak{g}=\mathfrak{h}+\mathfrak{m}
$.

Assume that we have a non-trivial $\mathrm{Ad}(H)$-invariant bi-invariant
orthogonal decomposition
$\mathfrak{m}=\mathfrak{m}_1+\mathfrak{m}_2$.
The normal homogeneous Riemannian metric on $G/H$ defined by $\langle\cdot,\cdot\rangle_{\mathrm{bi}}
|_{\mathfrak{m}\times\mathfrak{m}}$,
has a canonical two parameter family of deformations in homogeneous Riemannian geometry, i.e. for each pair of positive numbers $a$ and $b$, there is a unique homogeneous Riemannian metric on $G/H$ defined by the inner product
\begin{equation}\label{inner-product-a-b}
\langle\cdot,\cdot\rangle_{a,b}=
a\langle\cdot,\cdot\rangle
|_{\mathfrak{m}_1\times\mathfrak{m}_1}
\oplus b\langle\cdot,\cdot\rangle
|_{\mathfrak{m}_2\times\mathfrak{m}_2}.
\end{equation}

When the isotropy representation is the sum of two inequivalent
irreducible summands, the decomposition $\mathfrak{m}=\mathfrak{m}_1+\mathfrak{m}_2$ mentioned above can be uniquely determined.
Furthermore, in this case any homogeneous Riemannian metric on $G/H$ is induced by some $\langle\cdot,\cdot\rangle_{a,b}$ in (\ref{inner-product-a-b}).
See \cite{DK} for the complete classification for the compact coset space $G/H$ in this situation, up to finite coverings and automorphisms of $G$.

The homogeneous Riemannian deformation  $\langle\cdot,\cdot\rangle_{a,b}$ can be generalized to Finsler geometry as following.

\begin{definition}
Let $(G/H,F)$ be a homogeneous Finsler space on which the compact connected Lie group $G$ acts transitively. We call $F$ a standard homogeneous $(\alpha_1,\alpha_2)$-metric, or
call $(G/H,F)$ a standard homogeneous $(\alpha_1,\alpha_2)$-space, for the bi-invariant inner product $|\cdot|_{\mathrm{bi}}^2=
\langle\cdot,\cdot\rangle_{\mathrm{bi}}$ on $\mathfrak{g}$
and the $\mathrm{Ad}(H)$-invariant $\langle\cdot,\cdot\rangle_{\mathrm{bi}}$-orthogonal linear decompositions
$\mathfrak{g}=\mathfrak{h}+\mathfrak{m}$ and $\mathfrak{m}=\mathfrak{m}_1+\mathfrak{m}_2$,
if the Minkowski norm that $F$ defines on $\mathfrak{m}$ has the form of (\ref{0014}), i.e. $F(v)=|v|_{\mathrm{bi}}\varphi(|v_2|_{\mathrm{bi}}
/|v|_{\mathrm{bi}})$, for any nonzero vector $v=v_1+v_2\in\mathfrak{m}$ with $v_i\in\mathfrak{m}_i$ respectively.
\end{definition}

If we present a standard homogeneous $(\alpha_1,\alpha_2)$-metric
as $F=\alpha\varphi(\alpha_2/\alpha)$, $\alpha$ is then the normal homogeneous Riemannian metric defined by the given bi-invariant
inner product. The space of all standard homogeneous $(\alpha_1,\alpha_2)$-metrics on $G/H$ for the given bi-invariant inner product and decomposition is an infinite dimensional manifold parametrized by the positive smooth function $\varphi(s)$ on $[0,1]$ satisfying some regularity conditions. They include
the Riemannian ones defined by $\langle\cdot,\cdot\rangle_{a,b}$,
when $\varphi(s)=\sqrt{a+(b-a)s^2}$.

\section{Geodesic orbit $(\alpha_1,\alpha_2)$-spaces}

We recall the definition of {\it geodesic orbit Finsler space} in \cite{YD}.

\begin{definition}\label{definition 3.1}
Let $(M,F)$ be a connected Finsler space on which
a connected Lie group $G$ acts effectively and isometrically. We call a geodesic $c(t)$ on $(M,F)$ $G$-homogeneous, if it is
the orbit of a one-parameter subgroup in $G$, i.e. $c(t)=\exp tX\cdot x$ for some nonzero vector
$X\in\mathfrak{g}=\mathrm{Lie}(G)$. We call $(M,F)$
a $G$-geodesic orbit (or $G$-g.o.) Finsler space, if
any geodesic of positive constant speed is $G$-homogeneous.
\end{definition}

The g.o. Finsler spaces include many important subclasses,  normal homogeneous Finsler spaces \cite{XD}, $\delta$-homogeneous
Finsler spaces \cite{XZ}, Clifford--Wolf homogeneous Finsler spaces \cite{XD2013}, etc..
In particular, weakly symmetric
Finsler spaces are g.o. \cite{DH2013}.
For example, any $Spin(8)$-invariant Finsler metric on $Spin(8)/G_2$ is weakly symmetric
(see Proposition 6.2 in \cite{D1}), thus it is g.o..
More details about $Spin(8)/G_2$ can be found  in \cite{Kerr}\cite{Ziller}.

The nonzero vector $X\in\mathfrak{g}$ in Definition \ref{definition 3.1} is called a {\it geodesic vector}.
Given a reductive decomposition $\mathfrak{g}=\mathfrak{h}+\mathfrak{m}$, geodesic vectors
can be equivalently described by the following lemma in
\cite{YD}.

\begin{lemma}\label{criterion-geodesic-vector}
Let $(G/H,F)$ be a homogeneous Finsler space on which the connected Lie group $G$ acts effectively, and $\mathfrak{g}
=\mathfrak{h}+\mathfrak{m}$ is a reductive decomposition.
Then $X\in\mathfrak{g}$ is a $G$-geodesic vector iff
its $\mathfrak{m}$-factor $X_\mathfrak{m}$ is not zero, and
\begin{equation}\label{0015}
  g_{X_{\mathfrak{m}}}(X_{\mathfrak{m}},
  [X,\mathfrak{m}]_{\mathfrak{m}})=0.
\end{equation}
\end{lemma}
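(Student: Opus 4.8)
The plan is to recognize the homogeneous curve $c(t)=\exp(tX)\cdot o$ as a critical point of the energy functional and to compute its first variation after moving everything onto the Lie algebra. We may assume $c$ passes through the base point $o=eH$, since every geodesic is a $G$-translate of one through $o$ and $G$ acts by isometries. Under the identification $T_o(G/H)\cong\mathfrak m$ the velocity $\dot c(t)$ is the $G$-translate of $X_{\mathfrak m}$, so $c$ has constant speed $F_o(X_{\mathfrak m})$, where $F_o$ denotes the Minkowski norm $F$ induces on $\mathfrak m$; this is positive precisely when $X_{\mathfrak m}\neq 0$, which is therefore necessary for $X$ to be a geodesic vector, and we assume it from now on (so $\dot c$ avoids the zero section and $F^2$ is smooth along $c$ and along nearby curves). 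Because $c$ has constant speed, it is a geodesic if and only if it is a critical point of $E(c)=\tfrac12\int_0^1 F^2(\dot c(t))\,dt$ among curves with the fixed endpoints $c(0)=o$, $c(1)=\exp(X)\cdot o$.

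To test this efficiently, note that any variation field $W$ along $c$ with $W(0)=W(1)=0$ has the form $W(t)=(\exp tX)_\ast V(t)$ for a smooth curve $V\colon[0,1]\to\mathfrak m$ with $V(0)=V(1)=0$, and is realized by the variation $c_u(t)=\exp(tX)\exp(uV(t))\cdot o=:g_u(t)\cdot o$. By $G$-invariance of $F$ we have $F(\dot c_u(t))=F_o\big((\omega_u(t))_{\mathfrak m}\big)$ with $\omega_u(t)=g_u(t)^{-1}\dot g_u(t)$; here $\omega_0(t)\equiv X$, and with $\sigma_u(t)=g_u(t)^{-1}\partial_u g_u(t)\equiv V(t)$ the Maurer--Cartan identity $\partial_u\omega_u-\partial_t\sigma_u=[\omega_u,\sigma_u]$ yields $\partial_u|_{u=0}\,\omega_u(t)=\dot V(t)+[X,V(t)]$. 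Differentiating $E(c_u)$ under the integral sign and using $\partial_u\big(\tfrac12 F_o^2(\eta(u))\big)=g_{\eta(u)}\big(\eta(u),\eta'(u)\big)$ then gives
\begin{align*}
\frac{d}{du}\Big|_{u=0}E(c_u)
&=\int_0^1 g_{X_{\mathfrak m}}\!\big(X_{\mathfrak m},\,\dot V(t)+[X,V(t)]_{\mathfrak m}\big)\,dt\\
&=g_{X_{\mathfrak m}}\!\big(X_{\mathfrak m},\,V(1)-V(0)\big)+\int_0^1 g_{X_{\mathfrak m}}\!\big(X_{\mathfrak m},[X,V(t)]_{\mathfrak m}\big)\,dt\\
&=\int_0^1 g_{X_{\mathfrak m}}\!\big(X_{\mathfrak m},[X,V(t)]_{\mathfrak m}\big)\,dt,
\end{align*}
using $V(0)=V(1)=0$ and $\dot V(t)\in\mathfrak m$.

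Both directions of the lemma follow. If $c$ is a geodesic then the last integral vanishes for every admissible $V$; choosing $V(t)=\phi(t)Y$ with $Y\in\mathfrak m$ and $\phi$ smooth, $\phi(0)=\phi(1)=0$, $\int_0^1\phi\neq 0$, forces $g_{X_{\mathfrak m}}(X_{\mathfrak m},[X,Y]_{\mathfrak m})=0$ for every $Y\in\mathfrak m$. Conversely, if $g_{X_{\mathfrak m}}(X_{\mathfrak m},[X,\mathfrak m]_{\mathfrak m})=0$ then the integrand above vanishes identically (as $V(t)\in\mathfrak m$), so $\tfrac{d}{du}|_{u=0}E(c_u)=0$ for every fixed-endpoint variation and hence $c$ is a critical point of $E$, i.e.\ a geodesic. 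The argument is essentially routine once organized this way; I expect the points requiring the most care to be the reduction of an arbitrary fixed-endpoint variation field to the group-theoretic form $W(t)=(\exp tX)_\ast V(t)$, the Maurer--Cartan computation of $\partial_u|_{u=0}\omega_u$ (equivalently, a direct logarithmic-derivative computation), and the legitimacy of differentiating under the integral sign in the Finsler setting, where $F^2$ is smooth only away from the zero section --- all harmless here because $X_{\mathfrak m}\neq 0$.
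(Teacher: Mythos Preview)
The paper does not prove this lemma; it simply quotes it from \cite{YD} (Yan--Deng). So there is no ``paper's own proof'' to compare against. Your argument is a correct and self-contained proof.

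A couple of remarks on the details. Your reduction of an arbitrary variation field to the form $W(t)=(l_{\exp tX})_\ast V(t)$ with $V(t)\in\mathfrak m$ is fine: $l_{\exp tX}$ is a diffeomorphism of $G/H$ sending $o$ to $c(t)$, so its differential identifies $T_{c(t)}(G/H)$ with $\mathfrak m$, and the first variation of energy depends only on the variation field. Your Maurer--Cartan computation is also correct once you left-trivialize: writing $\dot g_u=(L_{g_u})_\ast\omega_u$ one gets $\dot c_u=(l_{g_u})_\ast(\omega_u)_{\mathfrak m}$ and hence $F(\dot c_u)=F_o((\omega_u)_{\mathfrak m})$, exactly as you use. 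The integration-by-parts step is immediate because $g_{X_{\mathfrak m}}(X_{\mathfrak m},\cdot)$ is a fixed linear functional on $\mathfrak m$, independent of $t$.

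For comparison, the original proof in \cite{YD} (and its Riemannian predecessor due to Kowalski--Vanhecke) proceeds instead via the geodesic spray / Euler--Lagrange equations written in exponential coordinates, showing that the spray coefficients along the orbit reduce to the expression $g_{X_{\mathfrak m}}(X_{\mathfrak m},[X,\cdot]_{\mathfrak m})$. Your variational route is equivalent and arguably cleaner, since it avoids any local coordinate expression of the spray and makes transparent why only the $\mathfrak m$-projection of the bracket appears.
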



By Definition \ref{definition 3.1},
the homogeneous Finsler space $(G/H, F)$ with the given reductive decomposition $\mathfrak{g}=\mathfrak{h}+\mathfrak{m}$ is $G$-g.o. iff $\mathrm{pr}_\mathfrak{m}$ maps the subset of
all $G$-geodesic vectors onto $\mathfrak{m}\backslash\{0\}$.

When $(G/H,F)$ is a homogeneous $(\alpha_1,\alpha_2)$-space,
we can apply Lemma \ref{criterion-geodesic-vector} to prove the following
criterion theorem.

\begin{theorem}\label{Theorem 1}
Let $F=\alpha\varphi(\alpha_2/\alpha)$ be a homogeneous $(\alpha_1,\alpha_2)$-metric on $G/H$ with respect to
the reductive decomposition $\mathfrak{g}=\mathfrak{h}+\mathfrak{m}$, the
$\mathrm{Ad}(H)$-invariant inner product $|\cdot|^2=\langle\cdot,\cdot\rangle$ and the $\mathrm{Ad}(H)$-invariant
$\langle\cdot,\cdot\rangle$-orthogonal decompositions
$\mathfrak{m}=\mathfrak{m}_1+\mathfrak{m}_2$.
Then $X\in \mathfrak{g}$
is a $G$-geodesic vector if and only if one of the following is satisfied:
\begin{description}
\item{\rm (1)} When $X_\mathfrak{m}
    \in (\mathfrak{m}_1\cup\mathfrak{m}_2)\backslash\{0\}$,
    we have \begin{equation}\label{0003}
  \langle [X,Z]_{\mathfrak{m}},X_{\mathfrak{m}} \rangle=0,\  \forall Z\in \mathfrak{m}.
\end{equation}
\item{\rm (2)} When
$X_\mathfrak{m}\in\mathfrak{m}\backslash
(\mathfrak{m}_1\cup\mathfrak{m}_2)$, we have
\begin{equation}\label{0002}
  \langle [X,\mathfrak{m}]_\mathfrak{m}, (\varphi(\theta)-\theta\varphi'(\theta))X_1+ (\varphi(\theta)-(\theta-\theta^{-1})\varphi'(\theta)) X_2\rangle=0.
\end{equation}
\end{description}
Here
$X=X_\mathfrak{h}+X_\mathfrak{m}=X_\mathfrak{h}+X_{1}+X_{2}$ is according to the decomposition $\mathfrak{g}=\mathfrak{h}+\mathfrak{m}=
\mathfrak{h}+\mathfrak{m}_1+
\mathfrak{m}_2$,
and $\theta=|X_2|/|X|\in[0,1]$.
\end{theorem}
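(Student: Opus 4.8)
The plan is to reduce everything to Lemma~\ref{criterion-geodesic-vector}: $X\in\mathfrak g$ is a $G$-geodesic vector precisely when $X_{\mathfrak m}\neq 0$ and the linear functional $w\mapsto g_{X_{\mathfrak m}}(X_{\mathfrak m},w)$ on $\mathfrak m$ annihilates the subspace $[X,\mathfrak m]_{\mathfrak m}$. So the task is to obtain an explicit expression for $g_{X_{\mathfrak m}}(X_{\mathfrak m},\cdot)$ in terms of $\varphi$ and the components $X_1,X_2$ of $X_{\mathfrak m}$, and then read off when it kills $[X,\mathfrak m]_{\mathfrak m}$.

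For the computation I would write $\tfrac12 F^2(v)=\Phi(A(v),B(v))$ on $\mathfrak m$, where $A(v)=|v|^2$ and $B(v)=|v_2|^2$ are smooth (quadratic) functions of $v$, and $\Phi(A,B)=\tfrac12 A\,\varphi(\sqrt{B/A})^{2}$ is positively $1$-homogeneous. Since $F^2$ is $2$-homogeneous, Euler's identity gives $g_y(y,w)=\tfrac12\frac{d}{dt}F^2(y+tw)|_{t=0}$, hence
\[
g_y(y,w)=2\Phi_A\,\langle y,w\rangle+2\Phi_B\,\langle y_2,w\rangle ,
\]
the partial derivatives $\Phi_A,\Phi_B$ being evaluated at $(A(y),B(y))$. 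A routine differentiation, as for $(\alpha,\beta)$-metrics, gives for $s=|y_2|/|y|\in(0,1)$ the values $\Phi_A=\tfrac12\varphi(s)(\varphi(s)-s\varphi'(s))$ and $\Phi_B=\varphi(s)\varphi'(s)/(2s)$; substituting these and splitting $\langle y,w\rangle=\langle y_1,w\rangle+\langle y_2,w\rangle$ yields
\[
g_y(y,w)=\varphi(s)\,\langle(\varphi(s)-s\varphi'(s))\,y_1+(\varphi(s)-(s-s^{-1})\varphi'(s))\,y_2,\ w\rangle .
\]

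Now set $y=X_{\mathfrak m}$, so $s=\theta$. If $X_{\mathfrak m}\notin\mathfrak m_1\cup\mathfrak m_2$, then $\theta\in(0,1)$, the displayed formula applies verbatim, and since $\varphi(\theta)>0$ the condition $g_{X_{\mathfrak m}}(X_{\mathfrak m},[X,Z]_{\mathfrak m})=0$ for all $Z\in\mathfrak m$ is equivalent to (\ref{0002}); Lemma~\ref{regularity-lemma-a1-a2} moreover shows the two scalar coefficients there are positive. If $X_{\mathfrak m}\in\mathfrak m_1\setminus\{0\}$, then $\theta=0$ and $y_2=0$, so the $\Phi_B$-term vanishes and only the value $\Phi_A|_{\theta=0}=\tfrac12\varphi(0)^2$ matters; this value is finite and positive because $\Phi_A,\Phi_B$ are smooth functions of $B/A\in[0,1]$ (as $F^2=A\,\varphi(\sqrt{B/A})^2$ is smooth on $\mathfrak m\setminus\{0\}$ and $A>0$ there), whence $g_{X_{\mathfrak m}}(X_{\mathfrak m},w)=\varphi(0)^2\langle X_{\mathfrak m},w\rangle$. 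Using instead the presentation $F=\alpha\psi(\alpha_1/\alpha)$ with $\psi(s)=\varphi(\sqrt{1-s^2})$ (as in Lemma~\ref{regularity-lemma-a1-a2}), the symmetric argument for $X_{\mathfrak m}\in\mathfrak m_2\setminus\{0\}$ gives $g_{X_{\mathfrak m}}(X_{\mathfrak m},w)=\varphi(1)^2\langle X_{\mathfrak m},w\rangle$. In both boundary cases the geodesic-vector condition is therefore equivalent to (\ref{0003}), which finishes the case analysis.

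The main technical point is the boundary behaviour at $\theta\in\{0,1\}$: the formula for $g_y$ carries an apparent $s^{-1}$-singularity there, and one must verify that it is cancelled by the vanishing of $\langle y_2,w\rangle$ (respectively $\langle y_1,w\rangle$). This is exactly the place where the regularity of $\varphi$ at the endpoints — which is forced by $F$ being a genuine Minkowski norm — enters; everything else is the standard $(\alpha,\beta)$-type computation transplanted to the $(\alpha_1,\alpha_2)$-setting.
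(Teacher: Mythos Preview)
Your proof is correct and follows essentially the same route as the paper: both reduce to Lemma~\ref{criterion-geodesic-vector} and compute the linear functional $g_{X_{\mathfrak m}}(X_{\mathfrak m},\cdot)$, arriving at the identical formula~(\ref{0001}) for $\theta\in(0,1)$ (your chain-rule derivation via $\Phi(A,B)$ is just a repackaging of the paper's direct differentiation). The only noticeable difference is in the boundary case $X_{\mathfrak m}\in\mathfrak m_i$: you compute explicitly that $g_{X_{\mathfrak m}}(X_{\mathfrak m},w)=\varphi(0)^2\langle X_{\mathfrak m},w\rangle$ (resp.\ $\varphi(1)^2$), relying on the finiteness of $\Phi_B$ at the endpoint, whereas the paper argues more conceptually that the $g_{X_{\mathfrak m}}$-orthogonal complement of $X_{\mathfrak m}$ coincides with its $\langle\cdot,\cdot\rangle$-orthogonal complement (an immediate consequence of the $O(\mathfrak m_1)\times O(\mathfrak m_2)$-symmetry of the norm), so that $X$ is a geodesic vector for $F$ iff it is one for $\alpha$. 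Your endpoint argument is fine, but the step ``$\Phi_A,\Phi_B$ are smooth functions of $B/A$'' is doing real work: it amounts to the fact that $h(r)=\varphi(\sqrt r)^2$ extends smoothly to $r=0$ and $r=1$, which does follow from the regularity of $F$ (cf.\ the references in Lemma~\ref{regularity-lemma-a1-a2}) but is not quite as immediate as your parenthetical suggests. The paper's symmetry argument sidesteps this analytic point entirely.
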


\begin{proof}
For any $U\in\mathfrak{m}\backslash\{0\}$ and $V\in\mathfrak{m}$, denote
$U=U_1+U_2$ and $V=V_1+V_2$ with $U_i,V_i\in\mathfrak{m}_i$ respectively. The fundamental tensor satisfies
\begin{eqnarray*}
g_U(U,V)=
\frac12\frac{\partial}{\partial t}F^2(U+tV)|_{t=0},
\end{eqnarray*}
where $$F^2(U+tV)=|U+tV|^2\varphi^2
(|U_2+tV_2|/|U+tV|).$$
Direct calculation shows, when
$\theta=|U_2|/|U|\in(0,1)$,
\begin{eqnarray}\label{0001}
g_U(U,V)=\varphi(\theta)\langle(\varphi(\theta)-
\theta\varphi'(\theta))U_1+(\varphi(\theta)-(\theta-\theta^{-1})
\varphi'(\theta))U_2,V\rangle.
\end{eqnarray}

When $X_\mathfrak{m}$ is contained in $\mathfrak{m}\backslash
(\mathfrak{m}_1\cup\mathfrak{m}_2)$, we can
take $U=X_\mathfrak{m}$ and $V=[X,Z]_\mathfrak{m}$ for any
$Z\in\mathfrak{m}$. By Lemma \ref{criterion-geodesic-vector} and
(\ref{0001}), we see (\ref{0002}) is the equivalent condition for $X$ to be a $G$-geodesic vector for $F$ in this case.

When the nonzero vector $X_\mathfrak{m}$ is contained in $\mathfrak{m}_1$ or
$\mathfrak{m}_2$, its $g_{X_\mathfrak{m}}$-orthogonal
complement coincides with its $\langle\cdot,\cdot\rangle$-orthogonal complement
in $\mathfrak{m}$. So in this case, by Lemma \ref{criterion-geodesic-vector}, $X$ is a $G$-geodesic vector for
$F$ iff it is a $G$-geodesic vector
for the homogeneous Riemannian metric $\alpha$ defined by $\langle\cdot,\cdot\rangle$. Using Lemma \ref{criterion-geodesic-vector} again for $\alpha$, we see
that (\ref{0003}) is an equivalent condition for $X$ to be
a $G$-geodesic vector for $F$ in this case.

This ends the proof of this theorem.
\end{proof}

\section{ Standard g.o. $(\alpha_1,\alpha_2)$-metrics}

Let $G/H$ be a smooth coset space on which the compact connected Lie group $G$ acts effectively. We choose an $\mathrm{Ad}(G)$-invariant inner product $\langle\cdot,\cdot\rangle_{\mathrm{bi}}$ and an
$\mathrm{Ad}(H)$-invariant $\langle\cdot,\cdot\rangle_{\mathrm{bi}}$-orthogonal decomposition
$\mathfrak{g}=\mathfrak{h}+\mathfrak{m}=\mathfrak{h}+\mathfrak{m}_1
+\mathfrak{m}_2$.

Then we consider a standard homogeneous
$(\alpha_1,\alpha_2)$-metric $F=\alpha\varphi(\alpha_2/\alpha)$
for the given bi-invariant inner product and decomposition,
where $\varphi(s)$ is a non-constant positive smooth function on $[0,1]$
and $\alpha$ is the normal homogeneous Riemannian metric defined
by $\langle\cdot,\cdot\rangle_{\mathrm{bi}}$.

\begin{theorem} \label{Theorem 2}
Let $G/H$ be a smooth coset space on which the compact connected Lie group $G$ acts effectively. Choose
a given bi-invariant inner product
$\langle\cdot,\cdot\rangle_{\mathrm{bi}}$ on $\mathfrak{g}$,
and an $\mathrm{Ad}(H)$-invariant $\langle\cdot,\cdot\rangle_{\mathrm{bi}}$-orthogonal
decomposition $\mathfrak{g}=\mathfrak{h}+\mathfrak{m}_1+\mathfrak{m}_2$, with
both $\dim\mathfrak{m}_i>0$.
Then the followings are equivalent:
\begin{description}
\item{\rm (1)} There exists a standard homogeneous $G$-g.o. $(\alpha_1,\alpha_2)$-metric $F=\alpha\varphi(\alpha_2/\alpha)$ for the given bi-invariant inner product and decomposition, such that
$\varphi'(s)\neq0$ for any $s\in(0,1)$.
\item{\rm (2)} Any standard g.o. $(\alpha_1,\alpha_2)$-metric
$F$ for the given bi-invariant inner product and decomposition
is $G$-g.o..
\item{\rm (3)} For any nonzero vectors $v_i\in\mathfrak{m}_i$ and  positive numbers $c_i\in\mathbb{R}$ for $i=1$ and $2$ respectively,
we can find a vector $u\in\mathfrak{h}$ such that
\begin{equation}\label{0020}
[u,c_1v_1+c_2v_2]+[v_1,v_2]_\mathfrak{m}=0.
\end{equation}
\end{description}
\end{theorem}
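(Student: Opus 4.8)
The plan is to prove the cyclic chain of implications $(2)\Rightarrow(3)\Rightarrow(1)\Rightarrow(2)$, using Theorem~\ref{Theorem 1} as the main computational tool throughout. The key observation is that condition (3) is exactly the ``generic direction'' content of the geodesic vector equation~(\ref{0002}): for a vector $X\in\mathfrak{g}$ with $X_\mathfrak{m}=X_1+X_2\in\mathfrak{m}\setminus(\mathfrak{m}_1\cup\mathfrak{m}_2)$, being a geodesic vector amounts to $\langle[X,\mathfrak{m}]_\mathfrak{m},\,p_1X_1+p_2X_2\rangle=0$ where $p_i=p_i(\theta)$ are the coefficients appearing in (\ref{0002}); and since $\langle\cdot,\cdot\rangle_\mathrm{bi}$ is bi-invariant, for $Z\in\mathfrak{m}$ one has $\langle[X,Z]_\mathfrak{m},p_1X_1+p_2X_2\rangle=\langle[X_\mathfrak{h}+X_1+X_2,Z],p_1X_1+p_2X_2\rangle=-\langle Z,[X_\mathfrak{h}+X_1+X_2,p_1X_1+p_2X_2]\rangle$, so the equation holds for all $Z\in\mathfrak{m}$ iff $[X_\mathfrak{h}+X_1+X_2,\,p_1X_1+p_2X_2]_\mathfrak{m}=0$. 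Expanding and using bi-invariance to note $[X_1,p_1X_1]=[X_2,p_2X_2]=0$, this becomes $[X_\mathfrak{h},p_1X_1+p_2X_2]+(p_2-p_1)[X_1,X_2]_\mathfrak{m}=0$, which up to rescaling is precisely (\ref{0020}) with $u=X_\mathfrak{h}$, $v_i=X_i$, $c_i=p_i$.

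For $(1)\Rightarrow(2)$: assume we have a g.o. metric with $\varphi'(s)\neq0$ on $(0,1)$. By Lemma~\ref{regularity-lemma-a1-a2}, $p_1(\theta)=\varphi(\theta)-\theta\varphi'(\theta)$ and $p_2(\theta)=\varphi(\theta)-(\theta-\theta^{-1})\varphi'(\theta)$ are both positive, and $p_2-p_1=\theta^{-1}\varphi'(\theta)\neq0$; moreover as $\theta$ ranges over $(0,1)$ and we rescale, the ratio $c_1/c_2$ sweeps out a set we must control. The g.o. hypothesis says: for every $X_\mathfrak{m}=X_1+X_2$ with both $X_i\neq0$, there is $X_\mathfrak{h}\in\mathfrak{h}$ making the displayed bracket vanish with $c_i=p_i(\theta)$ for the particular $\theta=|X_2|/|X|$ determined by $X_\mathfrak{m}$ (here one must first append a suitable $X_\mathfrak{h}$-component, but the existence of \emph{some} geodesic vector projecting to $X_\mathfrak{m}$ is exactly what g.o.\ gives). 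Independently scaling $v_1,v_2$ inside the bracket: replacing $(v_1,v_2)$ by $(\lambda_1 v_1,\lambda_2 v_2)$ changes $[v_1,v_2]_\mathfrak{m}$ by $\lambda_1\lambda_2$ and $[u,c_1v_1+c_2v_2]$ by allowing $u$ to absorb the rescaling, so one shows the solvability of (\ref{0020}) for a single ratio $c_1/c_2>0$ and fixed direction already implies it for all positive $c_1,c_2$ and that direction --- this uses that $[u,\cdot]$ is linear and $\mathfrak{h}$ is closed under the relevant operations. Then $(3)$ holds, and running the equivalence of the first paragraph backwards, for \emph{any} non-constant $\varphi$ every direction $X_\mathfrak{m}\notin\mathfrak{m}_1\cup\mathfrak{m}_2$ has the required $X_\mathfrak{h}$; the directions in $\mathfrak{m}_1\cup\mathfrak{m}_2$ are handled by case (1) of Theorem~\ref{Theorem 1}, which is the \emph{same} Riemannian geodesic vector condition for $\alpha$ independent of $\varphi$ --- and since $\alpha$ is normal homogeneous (hence g.o.), those are automatically fine. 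The implication $(2)\Rightarrow(1)$ is trivial since the Riemannian metrics $\varphi(s)=\sqrt{a+(b-a)s^2}$ with $a\neq b$ have $\varphi'(s)\neq0$ on $(0,1)$, provided one such metric is g.o.; but actually $(2)$ asserts \emph{all} standard g.o.\ metrics are g.o., which is vacuous-looking --- the real content is that $(2)$ should be read as ``every standard homogeneous $(\alpha_1,\alpha_2)$-metric is g.o.'', so I would take $(1)$, exhibit via the above that $(3)$ holds, and deduce $(2)$; then $(2)\Rightarrow(1)$ just needs one non-constant example, e.g.\ any non-normal Riemannian $\langle\cdot,\cdot\rangle_{a,b}$.

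More carefully, the clean logical route is: first prove $(1)\Leftrightarrow(3)$ by the bracket manipulation above, carefully checking that the scaling argument upgrades ``(\ref{0020}) solvable for the specific coefficients $p_i(\theta)$ arising from the given $\varphi$'' to ``solvable for all positive $c_1,c_2$''. The direction $(3)\Rightarrow(1)$ of this is immediate: given $(3)$, every $X_\mathfrak{m}\notin\mathfrak{m}_1\cup\mathfrak{m}_2$ yields, via $u$, a geodesic vector $X=u+X_\mathfrak{m}$ for \emph{any} non-constant $\varphi$ (in particular one with $\varphi'$ nonvanishing), and the $\mathfrak{m}_1\cup\mathfrak{m}_2$ directions are g.o.\ because $\alpha$ is. The direction $(1)\Rightarrow(3)$ is where the scaling lemma does its work. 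Then $(3)\Rightarrow(2)$ is the same argument applied to an arbitrary non-constant $\varphi$, and $(2)\Rightarrow(1)$ follows by noting a non-normal $\langle\cdot,\cdot\rangle_{a,b}$ is a standard homogeneous $(\alpha_1,\alpha_2)$-metric with nonvanishing $\varphi'$ on $(0,1)$, which by $(2)$ is g.o.

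The main obstacle I anticipate is the scaling step in $(1)\Rightarrow(3)$: one must show that knowing (\ref{0020}) is solvable only for the one-parameter family of coefficient pairs $(c_1,c_2)=(p_1(\theta),p_2(\theta))$ actually realized by the given $\varphi$ (as $\theta$ varies, and rescaled) forces solvability for \emph{every} pair of positive reals. The trick is that replacing $v_i$ by $\lambda_i v_i$ in (\ref{0020}) multiplies $[v_1,v_2]_\mathfrak{m}$ by $\lambda_1\lambda_2$ while $[u,c_1v_1+c_2v_2]=[u,(c_1\lambda_1)v_1+(c_2\lambda_2)v_2]$ after absorbing $\lambda_i$ into the coefficients; dividing through by $\lambda_1\lambda_2$ shows that the solvability for direction $(v_1,v_2)$ with coefficients $(c_1,c_2)$ is equivalent to solvability for the \emph{rescaled} direction with coefficients $(c_1/\lambda_2,c_2/\lambda_1)$ --- so only the products $c_1|v_2|^2$-type normalizations, i.e.\ really only whether the equation $[u,v_1+v_2]+\kappa[v_1,v_2]_\mathfrak{m}=0$ for all $\kappa>0$, matter, and choosing $|v_2|/|v_1|$ appropriately the parameter $\theta$ can be driven through all of $(0,1)$, which since $\varphi'\neq0$ on $(0,1)$ makes $p_2(\theta)-p_1(\theta)=\theta^{-1}\varphi'(\theta)$ nonzero throughout, so the $\theta$-family of coefficient ratios $c_1/c_2=p_1(\theta)/p_2(\theta)$ is a nondegenerate curve whose image, combined with the free rescaling, covers all positive ratios. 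Packaging this carefully — and verifying that the $\mathfrak{m}_1$- and $\mathfrak{m}_2$-pure directions never obstruct anything because $\alpha$ is already normal homogeneous and hence g.o.\ — is the technical heart of the argument.
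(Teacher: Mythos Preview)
Your proposal is essentially correct and follows the same route as the paper's proof: the cycle $(2)\Rightarrow(1)$ (trivial), $(3)\Rightarrow(2)$ (use the bracket reformulation of (\ref{0002}) to produce a geodesic vector over every $v\in\mathfrak{m}$), and $(1)\Rightarrow(3)$ (a scaling argument). Your key algebraic observation---that (\ref{0002}) is equivalent, via bi-invariance, to $[X_\mathfrak{h},p_1X_1+p_2X_2]+(p_2-p_1)[X_1,X_2]_\mathfrak{m}=0$---is exactly what the paper exploits.

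Two places where your sketch should be tightened to match the paper's rigor. First, in $(3)\Rightarrow(2)$ you must separately treat directions where $\varphi'(\theta)=0$: there $p_2-p_1=\theta^{-1}\varphi'(\theta)=0$, so condition (3) (which has a nonzero $[v_1,v_2]_\mathfrak{m}$ term) does not apply directly; instead one simply takes $u=0$, since then the geodesic condition reduces to $\langle[v,Z],v\rangle_{\mathrm{bi}}=0$, which holds by bi-invariance. Second, in $(1)\Rightarrow(3)$ your claim that ``solvability of (\ref{0020}) for a single ratio $c_1/c_2$ at a fixed direction implies it for all positive $c_1,c_2$'' is false as stated: the two equations obtained by projecting (\ref{0020}) to $\mathfrak{m}_1$ and $\mathfrak{m}_2$ are genuinely different for different ratios. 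The correct mechanism---which you partially recover later under ``free rescaling''---is the paper's one-parameter trick: apply the g.o.\ hypothesis not to $v_1+v_2$ but to $tv_1+t^{-1}v_2$, noting that $[tv_1,t^{-1}v_2]=[v_1,v_2]$ is $t$-invariant while the effective coefficients become $c_1(t)=t\,p_1(\theta(t))$ and $c_2(t)=t^{-1}p_2(\theta(t))$. Since $p_1,p_2$ are bounded above and below by Lemma~\ref{regularity-lemma-a1-a2}, one has $c_1(t)/c_2(t)\to 0$ as $t\to 0$ and $\to\infty$ as $t\to\infty$, and the intermediate value theorem delivers every prescribed ratio. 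With these two fixes your argument is complete and coincides with the paper's.
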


\begin{proof}
Firstly, the proof from (2) to (1) is obvious.

Nextly, we prove the statement from (3) to (2).
Let $F$ be any standard g.o. $(\alpha_1,\alpha_2)$-metric, which is defined
$F(v)=|v|_{\mathrm{bi}}\varphi
(|v_2|_{\mathrm{bi}}/|v|_{\mathrm{bi}})$,
in which $v=v_1+v_2$ is any nonzero vector in $\mathfrak{m}$ with
$v_i\in\mathfrak{m}_i$ respectively, and $\varphi(s)$ is a smooth function on $[0,1]$.

If $v\in\mathfrak{m}_1\cup\mathfrak{m}_2$, or
$v\in \mathfrak{m}\setminus (\mathfrak{m}_1\cup\mathfrak{m}_2)$ and
$\varphi'(\theta)=0$ in which $\theta=|v_2|_{\mathrm{bi}}
/|v|_{\mathrm{bi}}\in(0,1)$,
we may choose $u=0\in\mathfrak{h}$. Then by the bi-invariance
of $|\cdot|^2_{\mathrm{bi}}
=\langle\cdot,\cdot\rangle_{\mathrm{bi}}$, $X=v\in\mathfrak{g}$ satisfies (\ref{0003}) and (\ref{0002}) in Theorem \ref{Theorem 1}
respectively.

If $\theta\in(0,1)$ and $\varphi'(\theta)\neq0$, by Lemma \ref{regularity-lemma-a1-a2} and the statement (3) of this theorem, we can find $u\in\mathfrak{h}$
satisfying (\ref{0020}), in which
$$c_1=\frac{\theta\varphi(\theta)-\theta^2\varphi'(\theta)
}{|\varphi'(\theta)|}\quad\mbox{and}\quad
c_2=\frac{\theta\varphi(\theta)-(\theta^2-1)\varphi'(\theta)
}{|\varphi'(\theta)|},$$
i.e.,
$$\langle(\varphi(\theta)-\theta\varphi'(\theta))[\pm u,v_1]
+(\varphi(\theta)-(\theta-\theta^{-1})\varphi'(\theta))[\pm u,v_2]
+\theta^{-1}\varphi'(\theta)[v_1,v_2]_\mathfrak{m},
\mathfrak{m}\rangle_{\mathrm{bi}}=0,$$
in which $\pm$'s are taken with respect to the sign of $\varphi'(\theta)$.
Then $X=\pm u+v\in\mathfrak{g}$ satisfies (\ref{0002}) in
Theorem \ref{Theorem 1}.

To summarize, by Theorem \ref{Theorem 1}, we have proved (2), i.e., the g.o. property of $F$.

Finally, we prove the statement from (1) to (3).

Let $F$ be the standard g.o. $(\alpha_1,\alpha_2)$-metric indicated in (1), which can be similarly determined as above by the norm
$F(v)=|v|_{\mathrm{bi}}\varphi
(|v_2|_{\mathrm{bi}}/|v|_{\mathrm{bi}})$, in which the smooth function $\varphi(s)$ has a nonzero derivative at any $s\in(0,1)$.
Let $v_i\in\mathfrak{m}_i$ be any nonzero vector and $c_i$ any positive number, for $i=1$ and $2$ respectively. For any positive number $t$, we denote $v_1(t)=tv_1$,
$v_2(t)=t^{-1}v_2$ and $\theta(t)=|v_2(t)|_{\mathrm{bi}}/|v_1(t)+
v_2(t)|_{\mathrm{bi}}\in(0,1)$.
By the
g.o. property of $F$, we can find $u'\in\mathfrak{h}$, such that
$X=u'+v_1(t)+v_2(t)\in\mathfrak{g}$ satisfies (\ref{0002}) in Theorem
\ref{Theorem 1}, i.e., $u''=\theta(t)\varphi'(\theta(t))^{-1}u'$
satisfies
\begin{equation}\label{0030}
c_1(t)[u'',v_1]
+c_2(t)\varphi'(\theta))[ u'',v_2]
+[v_1,v_2]_\mathfrak{m}
=0,\end{equation}
in which $c_1(t)=t(\varphi(\theta(t))-\theta(t)\varphi'(\theta(t)))$ and
$c_2(t)=t^{-1}(\varphi(\theta)-(\theta-\theta^{-1})\varphi'(\theta))$.
The regularity of $F$ requires that $\varphi(s)$ is smooth and $\varphi(s)-s\varphi'(s)>0$ for $s\in [0,1]$. So $\varphi(s)-s\varphi'(s)$ has positive lower and upper bounds
for $s\in(0,1)$. There are similar regularity requirement
for $\psi(s)=\varphi(\sqrt{1-s^2})$, so $\varphi(s)-(s-s^{-1})\varphi'(s)=(\psi(\tilde{s})-
\tilde{s}\psi'(\tilde{s}))|_{\tilde{s}=\sqrt{1-s^2}}$
also has positive lower and upper bounds. Then it is easy to
see 
\begin{eqnarray*}
\lim_{t\rightarrow0}c_1(t)=\lim_{t\rightarrow+\infty}c_2(t)=0\quad
\mbox{and}\quad
\lim_{t\rightarrow+\infty}c_1(t)=\lim_{t\rightarrow0}c_2(t)=+\infty.
\end{eqnarray*}
By the continuity, we can find positive numbers $t_0$ and $c'$, such that $c_1(t_0)/c_1=c_2(t_0)/c_2=c'$. Then $u=c'u''$ satisfies
(\ref{0020}).

To summarize, this argument proves the statement from (1) to (3),
and ends the proof of this theorem.
\end{proof}

For example, any standard homogeneous Riemannian $(\alpha_1,\alpha_2)$-metric defined by the inner product
$\langle\cdot,\cdot\rangle_{a,b}$ with $a\neq b$ satisfies the
requirement in (1) of Theorem \ref{Theorem 2}. So we have the following immediate
corollary.

\begin{corollary}
Let $G$ be a compact Lie group endowed with
a bi-invariant inner product $\langle\cdot,\cdot\rangle_{\mathrm{bi}}$ on its Lie algebra
$\mathfrak{g}$, and $H$ a compact subgroup of $G$.
Suppose that there exists a Riemannian g.o. metric on $G/H$ defined by $a\langle\cdot,\cdot\rangle_{\mathrm{bi}}|_{\mathfrak{m}_1}
\oplus b\langle\cdot,\cdot\rangle_{\mathrm{bi}}|_{\mathfrak{m}_2}$, where $a\neq b$ are positive numbers and $\mathfrak{g}=\mathfrak{h}+\mathfrak{m}_1+\mathfrak{m}_2$ is a
$\langle\cdot,\cdot\rangle_{\mathrm{bi}}$-orthogonal decomposition. Then any standard homogeneous $(\alpha_1,\alpha_2)$-metric
with respect the given bi-invariant inner product and decomposition is g.o..
\end{corollary}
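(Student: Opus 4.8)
The plan is to recognize the Riemannian g.o.\ metric in the hypothesis as a single standard homogeneous $(\alpha_1,\alpha_2)$-metric satisfying the genericity condition (1) of Theorem \ref{Theorem 2}, and then to read off the conclusion from the implication (1) $\Rightarrow$ (2) of that theorem. First I would note, as remarked right after Theorem \ref{Theorem 2}, that the $G$-invariant Riemannian metric on $G/H$ induced by $a\langle\cdot,\cdot\rangle_{\mathrm{bi}}|_{\mathfrak{m}_1}\oplus b\langle\cdot,\cdot\rangle_{\mathrm{bi}}|_{\mathfrak{m}_2}$ is precisely the standard homogeneous $(\alpha_1,\alpha_2)$-metric $F=\alpha\varphi(\alpha_2/\alpha)$ with $\varphi(s)=\sqrt{a+(b-a)s^2}$, where $\alpha$ is the normal homogeneous Riemannian metric determined by $\langle\cdot,\cdot\rangle_{\mathrm{bi}}$. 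Being Riemannian, $F$ is automatically a regular Finsler metric (so the inequalities of Lemma \ref{regularity-lemma-a1-a2} hold, as one can also verify directly from this explicit $\varphi$), and by assumption it is $G$-g.o..

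Next I would compute $\varphi'(s)=(b-a)s\,(a+(b-a)s^2)^{-1/2}$. Since $a+(b-a)s^2$ is a convex combination of the positive numbers $a$ and $b$, it is positive on $[0,1]$, so $\varphi$ is smooth there; and since $a\neq b$ and $s\in(0,1)$, the numerator $(b-a)s$ never vanishes. Hence $\varphi'(s)\neq 0$ for every $s\in(0,1)$, which means this particular $F$ fulfills all the requirements listed in item (1) of Theorem \ref{Theorem 2}.

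Therefore the hypothesis of item (1) of Theorem \ref{Theorem 2} is satisfied, so its item (2) holds: every standard homogeneous $(\alpha_1,\alpha_2)$-metric for the given bi-invariant inner product and the orthogonal decomposition $\mathfrak{g}=\mathfrak{h}+\mathfrak{m}_1+\mathfrak{m}_2$ is $G$-g.o., which is exactly the claim. There is no real obstacle in this argument; the only point deserving a moment's attention is that the hypothesis $a\neq b$ is used in an essential way, since it is what makes $\varphi$ non-constant with nowhere-vanishing derivative on $(0,1)$, so that the \emph{generic} condition of Theorem \ref{Theorem 2}(1) is met rather than merely the weaker existence of some standard g.o.\ metric.
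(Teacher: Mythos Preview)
Your proof is correct and follows exactly the approach of the paper: the Riemannian metric $\langle\cdot,\cdot\rangle_{a,b}$ with $a\neq b$ is the standard $(\alpha_1,\alpha_2)$-metric with $\varphi(s)=\sqrt{a+(b-a)s^2}$, whose derivative is nonvanishing on $(0,1)$, so condition (1) of Theorem~\ref{Theorem 2} is met and (2) follows. The paper states this as an immediate corollary without spelling out the computation of $\varphi'$, but your added detail is accurate.
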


\section{Standard g.o. $(\alpha_1,\alpha_2)$-metrics associated with a triple}

The requirement $\varphi'(s)\neq0$ in (1) of Theorem \ref{Theorem 2} restrict the  applications of Theorem \ref{Theorem 2}. In this section, we show that this restriction
can be weakened when we consider standard homogeneous $(\alpha_1,\alpha_2)$-metrics associated with
special triples.

Let $(H,K,G)$ be a triple of compact Lie groups, which Lie algebras are denoted as $\mathfrak{h}$, $\mathfrak{k}$ and
$\mathfrak{g}$ respectively, satisfying $H\subset K\subset G$
with $\dim H<\dim K<\dim G$.
With respect to a given bi-invariant inner product $|\cdot|_{\mathrm{bi}}^2=\langle\cdot,\cdot\rangle_{\mathrm{bi}}$
on $\mathfrak{g}$, we have the orthogonal decompositions
\begin{eqnarray*}
& &\mathfrak{g}=\mathfrak{k}+\mathfrak{m}_B,\phantom{\,\,\,\,\, \mbox{ and }\quad}
\mathfrak{k}=\mathfrak{h}+\mathfrak{m}_F,\\
& &\mathfrak{g}=\mathfrak{h}+\mathfrak{m}, \quad\mbox{ and } \quad
\mathfrak{m}=\mathfrak{m}_F+\mathfrak{m}_B.
\end{eqnarray*}
Here we denote $\mathfrak{m}_F$ and
$\mathfrak{m}_B$ the summands in $\mathfrak{m}$, and vectors $v_F$ and
$v_B$ in them accordingly, rather than $\mathfrak{m}_i$ and $v_i$ with $i=1$ and $2$ previously, to imply they are related to the base and fiber respectively, for the fiber bundle
$$F=K/H\rightarrow G/H\rightarrow M=G/K$$
respectively.

A standard homogeneous $(\alpha_1,\alpha_2)$-metric on $G/H$ with respect to
the bi-invariant inner product and decomposition described above
is called a {\it standard} $(\alpha_1,\alpha_2)$-metric on $G/H$ {\it associated
with the triple} $(H,K,G)$.

Besides that we have
$[\mathfrak{h},\mathfrak{m}_F]\subset\mathfrak{m}_F$ and
$[\mathfrak{h},\mathfrak{m}_B]\subset\mathfrak{m}_B$,
there is an extra bracket relation
$[\mathfrak{m}_F,\mathfrak{m}_B]\subset\mathfrak{m}_B$,
which helps to simply the criterion of geodesic vector and g.o. property to the following lemma.

\begin{lemma}\label{lemma3}
Let $(G/H,F)$ be a standard homogeneous $(\alpha_1,\alpha_2)$-space associated with the triple $(G,K,H)$ of compact connected Lie groups, and keep all relevant assumptions and notations. Then we have the following:
\begin{description}
\item{\rm (1)} Suppose that $v\in\mathfrak{m}$ is a nonzero vector and there exists a vector $u\in\mathfrak{h}$ satisfying
\begin{equation}\label{0005}
[u,v_F]=0\quad\mbox{and}\quad [u+v_F,v_B]=0,
\end{equation}
where $v_F$ and $v_B$ are the component of $v$ in $\mathfrak{m}_F$ and $\mathfrak{m}_B$ respectively. Then
we can find a $G$-geodesic vector for $(G/H,F)$ in $v+\mathfrak{h}$.
\item{\rm (2)} Suppose that for any nonzero vector $v\in\mathfrak{m}$ we can find a vector $u\in\mathfrak{h}$
    satisfying (\ref{0005}). Then $(G/H,F)$ is $G$-g.o..
\end{description}
\end{lemma}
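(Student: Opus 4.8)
The plan is to deduce part (2) from part (1): granting (1), for every nonzero $v\in\mathfrak{m}$ there is a $G$-geodesic vector lying in $v+\mathfrak{h}$, so $\mathrm{pr}_{\mathfrak{m}}$ maps the set of all $G$-geodesic vectors onto $\mathfrak{m}\setminus\{0\}$, which is exactly the criterion for $(G/H,F)$ to be $G$-g.o. recorded just before Theorem \ref{Theorem 1}. For part (1) I would treat three cases according to the position of $v=v_F+v_B$, in each case producing an explicit candidate $X=cu+v\in v+\mathfrak{h}$ with a suitable scalar $c$ and checking the relevant condition of Theorem \ref{Theorem 1}. The only ingredients are the bi-invariance of $\langle\cdot,\cdot\rangle_{\mathrm{bi}}$, the bracket relations $[\mathfrak{h},\mathfrak{m}_F]\subset\mathfrak{m}_F$, $[\mathfrak{h},\mathfrak{m}_B]\subset\mathfrak{m}_B$, $[\mathfrak{m}_F,\mathfrak{m}_B]\subset\mathfrak{m}_B$, the hypothesis (\ref{0005}), and the positivity inequalities of Lemma \ref{regularity-lemma-a1-a2}.

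When $v\in(\mathfrak{m}_F\cup\mathfrak{m}_B)\setminus\{0\}$, take $X=u+v$, so that $X_{\mathfrak{m}}=v$ and one must verify (\ref{0003}). For $Z\in\mathfrak{m}$, bi-invariance gives $\langle[X,Z]_{\mathfrak{m}},v\rangle=\langle[X,Z],v\rangle=-\langle Z,[u+v,v]\rangle=-\langle Z,[u,v]\rangle$; this is $-\langle Z,[u,v_F]\rangle=0$ if $v=v_F$, and $-\langle Z,[u,v_B]\rangle=-\langle Z,[u+v_F,v_B]\rangle=0$ if $v=v_B$ (since then $v_F=0$), both by (\ref{0005}). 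Hence $X$ is a $G$-geodesic vector in these cases.

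The substantive case is $v\notin\mathfrak{m}_F\cup\mathfrak{m}_B$, that is $v_F\neq0$ and $v_B\neq0$. Put $\theta=|v_B|_{\mathrm{bi}}/|v|_{\mathrm{bi}}\in(0,1)$ and let $a=\varphi(\theta)-\theta\varphi'(\theta)$ and $b=\varphi(\theta)-(\theta-\theta^{-1})\varphi'(\theta)$ be the two coefficients occurring in (\ref{0002}); by Lemma \ref{regularity-lemma-a1-a2} both are positive, so in particular $b\neq0$. I would test $X=\tfrac{b-a}{b}\,u+v$. By bi-invariance, (\ref{0002}) for this $X$ is equivalent to $[X,\,av_F+bv_B]$ being $\langle\cdot,\cdot\rangle_{\mathrm{bi}}$-orthogonal to $\mathfrak{m}$; expanding the bracket, noting that the self-brackets of $v_F$ and $v_B$ vanish and that $[u,v_F]=0$, one obtains $[X,av_F+bv_B]=(b-a)\bigl([u,v_B]+[v_F,v_B]\bigr)=(b-a)[u+v_F,v_B]=0$ by (\ref{0005}). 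So (\ref{0002}) holds and $X\in v+\mathfrak{h}$ is the desired $G$-geodesic vector.

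I expect the mixed case to be the only non-routine step. The naive choice $X=u+v$ fails there, since the same computation leaves a residue $-a[v_F,v_B]$, which is a nonzero element of $\mathfrak{m}_B$ in general; one is forced to rescale $u$ by precisely the factor $(b-a)/b$ built from the $(\alpha_1,\alpha_2)$-weights in (\ref{0002}) so that the two contributions proportional to $[v_F,v_B]$ cancel. The bracket relation $[\mathfrak{m}_F,\mathfrak{m}_B]\subset\mathfrak{m}_B$ is what makes this unavoidable: it excludes any $\mathfrak{h}$-component of $[v_F,v_B]$ that might otherwise absorb the residue. Everything else in the argument is bi-invariance bookkeeping.
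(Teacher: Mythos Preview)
Your proof is correct and follows the same strategy as the paper's: dispose of the boundary cases $v\in\mathfrak{m}_F\cup\mathfrak{m}_B$ directly, and in the mixed case add a suitable scalar multiple of $u$ to $v$ so that condition (\ref{0002}) of Theorem \ref{Theorem 1} holds. Your direct bracket computation $[X,av_F+bv_B]=(b-a)[u+v_F,v_B]=0$ is slightly cleaner than the paper's separate analysis over $Z\in\mathfrak{m}_F$ and $Z\in\mathfrak{m}_B$, and your scalar $(b-a)/b=\theta^{-1}\varphi'(\theta)/b$ is in fact the correct one; the displayed coefficient in (\ref{0013}) carries a stray factor of $\theta$ in the numerator.
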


\begin{proof}
We assume that the standard homogeneous $(\alpha_1,\alpha_2)$-metric $F$ is defined by
$F(v)=|v|_{\mathrm{bi}}\varphi
(|v_B|_{\mathrm{bi}}/|v|_{\mathrm{bi}})$, in which $v=v_F+v_B$ with $v_B\in\mathfrak{m}_B$ and $v_F\in\mathfrak{m}_F$.

We have seen in the proof of Theorem \ref{Theorem 2} that
when $v_F=0$ or $v_B=0$, $X=v\in\mathfrak{g}$ is a geodesic vector. Meanwhile, it is obvious to see that $u=0\in\mathfrak{h}$
satisfies (\ref{0005}) in this case. Hence we only need to consider the case $v_F\neq0$ and $v_B\neq0$ when proving (1) in
the lemma.

By (\ref{0002}) in Theorem \ref{Theorem 1}, for any
nonzero vectors $v_B\in\mathfrak{m}_B$ and $v_F\in\mathfrak{m}_F$,
there exists a geodesic vector $X=u'+v=u'+v_F+v_B\in v+\mathfrak{h}$ iff
\begin{eqnarray}\label{0011}
& &(\varphi(\theta)-\theta\varphi'(\theta)))
\langle [v_F,u'+v_B],Z\rangle_{\mathrm{bi}}
+(\varphi(\theta)-(\theta-\theta^{-1})\varphi'(\theta))
\langle Z,[v_B,u'+v_F]\rangle_{\mathrm{bi}}\nonumber\\
&=&\langle[u'+v_F+v_B,Z]_\mathfrak{m},
(\varphi(\theta)-\theta\varphi'(\theta))v_F
+(\varphi(\theta)-(\theta-\theta^{-1})\varphi'(\theta))v_B
\rangle_{\mathrm{bi}}\nonumber\\
&=&0,\quad\forall Z\in\mathfrak{m}_F\cup\mathfrak{m}_B,
\end{eqnarray}
in which $\theta=|v_B|_{\mathrm{bi}}/|v|_{\mathrm{bi}}\in(0,1)$.

Because $[v_B,v_F]$ and $[v_B,u']$ are vectors in $\mathfrak{m}_B$, (\ref{0011}) is valid for any $Z\in\mathfrak{m}_F$ if and only if $[u',v_F]=0$. After assuming $[u',v_F]=0$, (\ref{0011}) for any $Z\in\mathfrak{m}_B$ can be reduced to
\begin{equation}\label{0012}
\theta^{-1}\varphi'(\theta)
\langle Z,[v_B,v_F]\rangle_{\mathrm{bi}}
+(\varphi(\theta)-(\theta-\theta^{-1})\varphi'(\theta))
\langle Z,[v_B,u']\rangle_{\mathrm{bi}}=0,\quad\forall Z\in\mathfrak{m}_B.
\end{equation}

Since we have assumed the existence of $u\in\mathfrak{h}$
satisfying (\ref{0005}), the vector
\begin{equation}\label{0013}
u'=\frac{\theta\varphi'(\theta)}{
\theta\varphi(\theta)-(\theta^2-1)\varphi'(\theta)
} u\in\mathfrak{h}
\end{equation}
satisfies both $[u',v_F]=0$ and (\ref{0012}), i.e.,
$X=u'+v$ is a geodesic vector in $\mathfrak{g}$.

This ends the proof of (1) in the lemma. The statement (2) in the lemma follows (1) immediately.
\end{proof}

The main theorem of this section is the following.

\begin{theorem}\label{Theorem 3}
Let $(H,K,G)$ be a triple of compact connected Lie groups satisfying $H\subset K\subset G$ and $\dim H<\dim K<\dim G$.
Fix an $\mathrm{Ad}(G)$-invariant $|\cdot|_{\mathrm{bi}}^2=\langle\cdot,\cdot\rangle_{\mathrm{bi}}$, and
$\langle\cdot,\cdot\rangle_{\mathrm{bi}}$-orthogonal decompositions $\mathfrak{g}=\mathfrak{k}+\mathfrak{m}_B$,
$\mathfrak{k}=\mathfrak{h}+\mathfrak{m}_F$ and $\mathfrak{m}=\mathfrak{m}_F+\mathfrak{m}_B$. Then
we have the following
equivalent statements:
\begin{description}
\item{\rm (1)} There exists a $G$-g.o. standard homogeneous $(\alpha_1,\alpha_2)$-metric on $G/H$ associated with the triple $(G,K,H)$ and $\langle\cdot,\cdot\rangle_{\mathrm{bi}}$, which is not a $G$-normal homogeneous Riemannian metric;
\item{\rm (2)} All standard homogeneous $(\alpha_1,\alpha_2)$-metrics on $G/H$ associated with the triple $(G,K,H)$ and $\langle\cdot,\cdot\rangle_{\mathrm{bi}}$ are $G$-g.o.;
\item{\rm (3)} There exists a constant $\theta\in (0,1)$ such that for any vector $v=v_F+v_B\in\mathfrak{m}$ with $v_F\in\mathfrak{m}_F$,
    $v_B\in\mathfrak{m}_B$ and $|v_F|_{\mathrm{bi}}/|v_F+v_B|_{\mathrm{bi}}=\theta$, we
    can find a vector $u\in\mathfrak{h}$ satisfying
    $[u,v_F]=[u+v_F,v_B]=0$.
\item{\rm (4)} For any nonzero vector $v\in\mathfrak{m}$, we can find a vector $u\in\mathfrak{h}$ satisfying (\ref{0005}), i.e. $[u,v_F]=[u+v_F,v_B]=0$, where $v_F$ and $v_B$ are the
    components of $v$ in $\mathfrak{m}_F$ and $\mathfrak{m}_B$
    respectively.
\end{description}
\end{theorem}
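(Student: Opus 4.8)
The plan is to establish the cycle of implications $(2)\Rightarrow(1)\Rightarrow(3)\Rightarrow(4)\Rightarrow(2)$. The implication $(4)\Rightarrow(2)$ is already recorded as Lemma \ref{lemma3}(2), so nothing new is needed there. The implication $(2)\Rightarrow(1)$ is immediate: the standard homogeneous $(\alpha_1,\alpha_2)$-metrics associated with the triple form an infinite-dimensional family containing, for instance, those defined by $\varphi(s)=\sqrt{a+(b-a)s^2}$ with $a\neq b$, none of which is a $G$-normal homogeneous Riemannian metric, so (2) forces at least one of them to be $G$-g.o., which is exactly the content of (1). Thus the two substantive steps are $(1)\Rightarrow(3)$ and $(3)\Rightarrow(4)$.

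For $(1)\Rightarrow(3)$: let $F=\alpha\varphi(\alpha_2/\alpha)$ be the $G$-g.o. metric in (1), with $\varphi$ a non-constant smooth function on $[0,1]$. I would first observe that non-constancy and smoothness give some $s_0\in(0,1)$ with $\varphi'(s_0)\neq0$, and set $\theta:=\sqrt{1-s_0^2}\in(0,1)$. Given any $v=v_F+v_B$ with $|v_F|_{\mathrm{bi}}/|v|_{\mathrm{bi}}=\theta$ (hence $|v_B|_{\mathrm{bi}}/|v|_{\mathrm{bi}}=s_0$, so that automatically both $v_F\neq0$ and $v_B\neq0$), the $G$-g.o. property together with the remark following Lemma \ref{criterion-geodesic-vector} provides a $G$-geodesic vector $X=u'+v$ with $u'\in\mathfrak{h}$. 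Since $v\notin\mathfrak{m}_F\cup\mathfrak{m}_B$, $X$ satisfies (\ref{0002}); running the computation in the proof of Lemma \ref{lemma3} in the same direction, this forces $[u',v_F]=0$ together with (\ref{0012}). Because $[v_B,u']$ and $[v_B,v_F]$ both lie in $\mathfrak{m}_B$, while the coefficient $\varphi(s_0)-(s_0-s_0^{-1})\varphi'(s_0)$ is strictly positive by Lemma \ref{regularity-lemma-a1-a2} and $\varphi'(s_0)\neq0$, equation (\ref{0012}) determines $[v_B,u']$ as an explicit nonzero multiple of $[v_B,v_F]$; rescaling $u'$ to $u:=\lambda u'$ with the scalar $\lambda$ given by the reciprocal of the factor appearing in (\ref{0013}) then yields $[u,v_F]=0$ and $[u+v_F,v_B]=0$, which is (3).

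For $(3)\Rightarrow(4)$: fix a nonzero $v=v_F+v_B\in\mathfrak{m}$. If $v_F=0$ or $v_B=0$ then $u=0$ satisfies (\ref{0005}), so assume both are nonzero. For $t>0$ set $v(t)=t v_F+t^{-1}v_B$; by bi-invariance and $\mathfrak{m}_F\perp\mathfrak{m}_B$ the ratio $|t v_F|_{\mathrm{bi}}/|v(t)|_{\mathrm{bi}}$ depends continuously on $t$ and tends to $0$ as $t\to0^+$ and to $1$ as $t\to+\infty$, so by the intermediate value theorem there is a $t_0>0$ with $|t_0 v_F|_{\mathrm{bi}}/|v(t_0)|_{\mathrm{bi}}=\theta$. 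Applying (3) to $v(t_0)$ produces $u\in\mathfrak{h}$ with $[u,t_0 v_F]=0$ and $[u+t_0 v_F,t_0^{-1}v_B]=0$; these are equivalent to $[u,v_F]=0$ and $[u,v_B]+t_0[v_F,v_B]=0$, and replacing $u$ by $u/t_0$ gives $[u/t_0,v_F]=0$ and $[u/t_0+v_F,v_B]=0$, so $u/t_0$ witnesses (\ref{0005}) for $v$. This establishes (4) and closes the cycle.

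The main obstacle is the bookkeeping in $(1)\Rightarrow(3)$: one must re-run the reduction of the geodesic-vector equation (\ref{0002}) that already appears in the proof of Lemma \ref{lemma3}, but now used to extract from the existence of the geodesic vector $X=u'+v$ the algebraic identities $[u',v_F]=0$ and (\ref{0012}), and then to invert the relation (\ref{0013}) so as to pass from $u'$ back to a $u$ satisfying (\ref{0005}) — all the while checking that Lemma \ref{regularity-lemma-a1-a2} keeps the relevant coefficient positive precisely at the parameter value $s_0$ where $\varphi'\neq0$. Everything else reduces either to a direct appeal to an earlier result or to the elementary scaling argument used in $(3)\Rightarrow(4)$.
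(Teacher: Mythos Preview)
Your proof is correct and follows essentially the same cycle $(2)\Rightarrow(1)\Rightarrow(3)\Rightarrow(4)\Rightarrow(2)$ as the paper, with the same key ideas: in $(1)\Rightarrow(3)$ you pick a point where $\varphi'\neq0$, unwind the geodesic-vector condition via the computation in Lemma~\ref{lemma3} to get $[u',v_F]=0$ and (\ref{0012}), and rescale; in $(3)\Rightarrow(4)$ you rescale $v$ to hit the prescribed ratio. The only cosmetic difference is that in $(3)\Rightarrow(4)$ the paper scales only the $\mathfrak{m}_B$-component (setting $v'_F=v_F$, $v'_B=\lambda v_B$), so the $u$ produced by (3) works directly without the further division by $t_0$ that your two-sided scaling $tv_F+t^{-1}v_B$ requires.
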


\begin{proof}
Firstly, we prove the claim from (1) to (3).

Assume the standard homogeneous $(\alpha_1,\alpha_2)$-metric
indicated in (1) is defined by
$F(v)=|v|_{\mathrm{bi}}
\varphi(|v_B|_{\mathrm{bi}}/|v|_{\mathrm{bi}})$,
where $v=v_F+v_B\in\mathfrak{m}\backslash\{0\}$ with
$v_F\in\mathfrak{m}_F$ and $v_B\in\mathfrak{m}_B$. Because $F$ is not a $G$-normal homogeneous Riemannian metric, $\varphi(s)$ is
not a constant function. We may assume $\varphi'(\theta_0)\neq0$ for
some $\theta_0\in(0,1)$.

Let $v=v_F+v_B$ be any vector in $\mathfrak{m}$ satisfying
$|v_B|_{\mathrm{bi}}/|v|_{\mathrm{bi}}=\theta_0$.
Since $F$ is $G$-g.o., there exists a vector $u'\in\mathfrak{h}$
such that
$X=u'+v$ is a $G$-geodesic vector. By similar argument as for
Lemma \ref{lemma3}, we can see
$$u=\frac{
\theta_0\varphi(\theta_0)-(\theta_0^2-1)\varphi'(\theta_0)
}{\theta_0\varphi'(\theta_0)}u'$$
satisfies(\ref{0005}).

This argument proves the statement from (1) to (3).


Secondly, we prove the claim from (3) to (4).

Let $\theta_0\in(0,1)$ be the constant indicated in (3). If $v_F=0$ or
$v_B=0$, obviously $u=0$ satisfies (\ref{0005}), i.e., the requirement in (4) of the theorem.
When $v_F\neq0$ and $v_B\neq0$, we can find a suitable $\lambda>0$, such that $v'_F=v_F$ and $v'_B=\lambda v_B$ satisfy
$|v'_B|_{\mathrm{bi}}/|v'_F+v'_B|_{\mathrm{bi}}=\theta_0$. The
statement (2) provides the vector $u'\in\mathfrak{h}$ satisfying
$[u',v'_F]=[u'+v'_F,v'_B]=0$, which also satisfies (\ref{0005})
for $v=v_F+v_B$.

This argument proves the statement from (3) to (4).

Finally, the claim from (4) to (2) follows Lemma \ref{lemma3},
and the claim from (2) to (1) is obvious.
This ends the proof of Theorem \ref{Theorem 3}.
\end{proof}

\section{Examples of standard g.o. $(\alpha_1,\alpha_2)$-spaces}

In this section, we discuss some examples of
standard g.o. $(\alpha_1,\alpha_2)$-spaces associated with triples.
We keep all relevant notations for triples of compact Lie groups
in the last section.

Recall that in \cite{T} H. Tamaru has discussed the triples $(G,K,H)$ satisfying the following algebraic condition.

{\bf Condition I:} $(G,K)$ is a compact effective irreducible symmetric pair, and for every $v_F\in \mathfrak{m}_F$ and $v_B\in \mathfrak{m}_B$, there exists $u\in \mathfrak{h}$ satisfying (\ref{0005}), i.e.,
\begin{equation*}
  [u,v_F]=0\ \ \ \mbox{and}\ \ \ [u+v_F,v_B]=0.
\end{equation*}

He classified all these triples
in the Lie algebra level, by Table \ref{TableI}.

\begin{table}[htbp]
\centering
\caption{ The triples satisfying Condition I}\label{TableI}
\begin{tabular}{|c|c|c|c|c|}
  \hline
        & $\mathfrak{g}$          &$\mathfrak{k}$       &$\mathfrak{h}$     &    \\
  \hline
  (1.1) & $\mathfrak{so}(2n+1)$   &$\mathfrak{so}(2n)$  &$\mathfrak{u}(n)$  & $n\geq 2$  \\
  \hline
  (1.2) & $\mathfrak{so}(4n+1)$   &$\mathfrak{so}(4n)$  &$\mathfrak{su}(2n)$  & $n\geq 1$  \\
  \hline
  (1.3) & $\mathfrak{so}(8)$   &$\mathfrak{so}(7)$  &$\mathfrak{g}_2$        &   \\
  \hline
  (1.4) & $\mathfrak{so}(9)$   &$\mathfrak{so}(8)$  &$\mathfrak{so}(7)$       &   \\
  \hline
  (1.5) & $\mathfrak{su}(n+1)$   &$\mathfrak{u}(n)$  &$\mathfrak{su}(n)$  & $n\geq 2$  \\
  \hline
  (1.6) & $\mathfrak{su}(2n+1)$   &$\mathfrak{u}(2n)$  &$\mathfrak{u}(1)\oplus \mathfrak{sp}(n)$  & $n\geq 2$  \\
  \hline
  (1.7) & $\mathfrak{su}(2n+1)$   &$\mathfrak{u}(2n)$  &$\mathfrak{sp}(n)$  & $n\geq 2$  \\
  \hline
  (1.8) & $\mathfrak{sp}(n+1)$   &$\mathfrak{sp}(1)\oplus \mathfrak{sp}(n)$  &$\mathfrak{u}(1)\oplus \mathfrak{sp}(n)$  & $n\geq 1$  \\
  \hline
  (1.9) & $\mathfrak{sp}(n+1)$   &$\mathfrak{sp}(1)\oplus \mathfrak{sp}(n)$  &$ \mathfrak{sp}(n)$  & $n\geq 1$  \\
  \hline
  (2.1) & $\mathfrak{su}(2r+n)$   &$\mathfrak{su}(r)\oplus \mathfrak{su}(r+n)\oplus \mathbf{R} $  &$\mathfrak{su}(r)\oplus \mathfrak{su}(r+n)$  & $r\geq 2,n\geq 1$  \\
  \hline
  (2.2) & $\mathfrak{so}(4r+2)$   &$\mathfrak{u}(2r+1)$  &$\mathfrak{su}(2r+1)$  & $r\geq 2$  \\
  \hline
  (2.3) & $\mathfrak{e}(6)$   &$\mathfrak{so}(10)\oplus \mathbf{R}$  &$\mathfrak{so}(10)$  &   \\
  \hline
  (3.1) & $\mathfrak{so}(9)$   &$\mathfrak{so}(7)\oplus \mathfrak{so}(2)$  &$\mathfrak{g}_2\oplus \mathfrak{so}(2)$  &   \\
  \hline
  (3.2)& $\mathfrak{so}(10)$   &$\mathfrak{so}(8)\oplus \mathfrak{so}(2)$  &$\mathfrak{spin}(7)\oplus \mathfrak{so}(2)$  &   \\
  \hline
  (3.3) & $\mathfrak{so}(11)$   &$\mathfrak{so}(8)\oplus \mathfrak{so}(3)$  &$\mathfrak{spin}(7)\oplus \mathfrak{so}(3)$  &   \\
  \hline
\end{tabular}
\end{table}

By Theorem \ref{Theorem 3}, we see easily
\begin{corollary}\label{last-cor}
All standard $(\alpha_1,\alpha_2)$-metrics on $G/H$ associated with the triple $(G,K$, $H)$ in Table \ref{TableI} are $G$-g.o..
\end{corollary}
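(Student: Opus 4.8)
The plan is to observe that \textbf{Condition I} is, up to the passage from Lie groups to Lie algebras, literally statement (4) of Theorem \ref{Theorem 3}, so the corollary is an immediate application of the equivalence (4)$\Leftrightarrow$(2) there (or, equally well, of Lemma \ref{lemma3}(2)). First I would fix, for each row of Table \ref{TableI}, a triple $(H,K,G)$ of compact connected Lie groups realizing the listed Lie algebra inclusions $\mathfrak{h}\subset\mathfrak{k}\subset\mathfrak{g}$, together with a bi-invariant inner product $\langle\cdot,\cdot\rangle_{\mathrm{bi}}$ on $\mathfrak{g}$ and the induced $\langle\cdot,\cdot\rangle_{\mathrm{bi}}$-orthogonal decompositions $\mathfrak{g}=\mathfrak{k}+\mathfrak{m}_B$, $\mathfrak{k}=\mathfrak{h}+\mathfrak{m}_F$ and $\mathfrak{m}=\mathfrak{m}_F+\mathfrak{m}_B$. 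One reads off from the table that $\dim\mathfrak{h}<\dim\mathfrak{k}<\dim\mathfrak{g}$ in every case, so $\mathfrak{m}_F\neq 0$ and $\mathfrak{m}_B\neq 0$ and the standing hypotheses of Theorem \ref{Theorem 3} are met. Since the geodesic-vector criterion of Lemma \ref{criterion-geodesic-vector}, the bracket relations in (\ref{0005}), and hence the whole g.o. condition are infinitesimal — they involve only the Lie algebra data and the bi-invariant inner product — neither the choice of groups with these Lie algebras nor a positive rescaling of $\langle\cdot,\cdot\rangle_{\mathrm{bi}}$ affects the conclusion; in particular it suffices to work at the Lie algebra level, exactly as Table \ref{TableI} is stated.

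Next I would match the quantifiers. Condition I asserts that for every $v_F\in\mathfrak{m}_F$ and every $v_B\in\mathfrak{m}_B$ there is $u\in\mathfrak{h}$ with $[u,v_F]=0$ and $[u+v_F,v_B]=0$. Given any nonzero $v\in\mathfrak{m}$, decompose $v=v_F+v_B$ with $v_F\in\mathfrak{m}_F$ and $v_B\in\mathfrak{m}_B$; the vector $u$ that Condition I provides for the pair $(v_F,v_B)$ satisfies precisely (\ref{0005}) (when $v_F=0$ or $v_B=0$ one may simply take $u=0$). Hence statement (4) of Theorem \ref{Theorem 3} holds for each triple in Table \ref{TableI}, and by the equivalence (4)$\Leftrightarrow$(2) in that theorem, every standard homogeneous $(\alpha_1,\alpha_2)$-metric on $G/H$ associated with that triple and $\langle\cdot,\cdot\rangle_{\mathrm{bi}}$ is $G$-g.o., which is the assertion of the corollary.

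There is essentially no hard step here: the content is entirely carried by Theorem \ref{Theorem 3} (itself built on Theorem \ref{Theorem 1} and Lemma \ref{lemma3}) together with Tamaru's classification. The only points requiring a moment's care — which I would state explicitly rather than compute — are the bookkeeping items above: that each row of the table genuinely yields a triple with $\dim H<\dim K<\dim G$, that the symmetric-pair hypothesis built into Condition I is extra structure not needed for this corollary, and that the scaling-independence and Lie-algebra-level nature of the relevant conditions let us invoke the table verbatim. Once these remarks are in place, the corollary follows in one line.
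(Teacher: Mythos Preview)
Your proposal is correct and matches the paper's approach: the paper simply states that the corollary follows immediately from Theorem \ref{Theorem 3}, since Condition I defining Table \ref{TableI} is exactly statement (4) there. Your additional bookkeeping remarks (dimension inequalities, Lie-algebra level, scaling-independence) are reasonable elaborations of what the paper leaves implicit.
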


Among all the coset spaces $G/H$ associated with triples in Table \ref{TableI}, there are many weakly symmetric homogeneous spaces \cite{Ziller}, for example, the cases (1.1), (1.3), (1.4), (1.6), (1.7), (1.8) and (3.2). On the other hand, the cases (1.2), (1.9), (3.1) and (3.3) provide new examples of g.o. Finsler metrics on
 $SO(4n+1)/SU(2n)$, $Sp(n+1)/Sp(n)$, $SO(9)/(G_2\times SO(2))$ and $SO(11)/(Spin(7)\times SO(3))$ respectively,
 which are not weakly symmetric.

The case (1.5), (2.1), (2.2) and (2.3) are $\varphi$-symmetric spaces \cite{JK1993}\cite{Ta1977},
i.e., weakly symmetric with respect to the action of $G\times SO(2)$. By Lemma 4.2 in
\cite{Xu2018} and Corollary \ref{last-cor}, standard homogeneous $(\alpha_1,\alpha_2)$-metrics
on these coset spaces are $G\times SO(2)$-invariant. So they satisfy both the
$G\times SO(2)$-weakly symmetric property and the $G$-g.o. property.

For the special subcases $SO(5)/U(2)$, $Spin(8)/G_2$ and $Spin(9)/Spin(7)$ in Table \ref{TableI}, the generic orbits of isotropy $H$-actions
in $\mathfrak{m}=\mathfrak{m}_F+\mathfrak{m}_B$ is of codimension
two  \cite{Straume}. All $G$-invariant Finsler metrics on these coset spaces
are standard homogeneous $(\alpha_1,\alpha_2)$-metrics associated
to the triples.

Nextly, we consider standard homogeneous
$(\alpha_1,\alpha_2)$-metrics on the three Wallach
spaces \cite{W}
\begin{equation}
W^6=SU(3)/T^2,\, W^{12}=Sp(3)/Sp(1)^3\mbox{ and } W^{24}=F_4/Spin(8),\nonumber
\end{equation}
and prove

\begin{theorem}\label{last-thm}
Let $G/H$ be one of the Wallach spaces $W^6$, $W^{12}$, $W^{24}$, then any standard $G$-g.o. $(\alpha_1,\alpha_2)$-metric on $G/H$ is a $G$-normal homogeneous Riemannian metric.
\end{theorem}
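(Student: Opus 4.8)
The plan is to recognize each Wallach space, together with any admissible two-summand decomposition, as a standard $(\alpha_1,\alpha_2)$-space associated with a triple $(H,K,G)$ whose base $G/K$ is a rank-one projective plane, and then to apply Theorem \ref{Theorem 3} in contrapositive form together with H. Tamaru's classification in Table \ref{TableI}.

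First I would recall the structure of the three Wallach spaces (see \cite{W}\cite{Ziller}): in each case the isotropy representation splits as $\mathfrak{m}=\mathfrak{m}_1'+\mathfrak{m}_2'+\mathfrak{m}_3'$ into three pairwise inequivalent $\mathrm{Ad}(H)$-irreducible summands of equal real dimension ($2$, $4$, $8$ for $W^6$, $W^{12}$, $W^{24}$ respectively), with the bracket relations $[\mathfrak{m}_i',\mathfrak{m}_i']\subset\mathfrak{h}$ and $[\mathfrak{m}_i',\mathfrak{m}_j']=\mathfrak{m}_k'$ whenever $\{i,j,k\}=\{1,2,3\}$. Since the three summands are pairwise inequivalent, every $\mathrm{Ad}(H)$-invariant subspace of $\mathfrak{m}$ is a sum of some of the $\mathfrak{m}_i'$, so any $\mathrm{Ad}(H)$-invariant $\langle\cdot,\cdot\rangle_{\mathrm{bi}}$-orthogonal decomposition $\mathfrak{m}=\mathfrak{m}_1+\mathfrak{m}_2$ with both summands nonzero is, after relabeling, $\mathfrak{m}_1=\mathfrak{m}_1'$ and $\mathfrak{m}_2=\mathfrak{m}_2'+\mathfrak{m}_3'$. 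Let $F=\alpha\varphi(\alpha_2/\alpha)$ be a standard $G$-g.o. $(\alpha_1,\alpha_2)$-metric for such a decomposition. As $[\mathfrak{m}_1',\mathfrak{m}_1']\subset\mathfrak{h}$, the subspace $\mathfrak{k}:=\mathfrak{h}+\mathfrak{m}_1'$ is a subalgebra of $\mathfrak{g}$; letting $K$ be the corresponding connected subgroup, $(H,K,G)$ is a triple with $\dim H<\dim K<\dim G$ carrying precisely the orthogonal decompositions of Section 5 with $\mathfrak{m}_F=\mathfrak{m}_1'$ and $\mathfrak{m}_B=\mathfrak{m}_2'+\mathfrak{m}_3'$. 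Relabeling $\mathfrak{m}_1,\mathfrak{m}_2$ and replacing the profile function by $s\mapsto\varphi(\sqrt{1-s^2})$ if necessary (which presents the same metric $F$ and changes neither whether it is normal Riemannian nor the g.o. property), we may assume $\mathfrak{m}_2=\mathfrak{m}_B$ and $\mathfrak{m}_1=\mathfrak{m}_F$, so that $F$ is a standard $(\alpha_1,\alpha_2)$-metric associated with the triple $(H,K,G)$, and $F$ is a $G$-normal homogeneous Riemannian metric if and only if its profile function is constant.

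Next I would note that the bracket relations give $[\mathfrak{m}_B,\mathfrak{m}_B]\subset\mathfrak{h}+\mathfrak{m}_1'=\mathfrak{k}$ and $[\mathfrak{k},\mathfrak{m}_B]\subset\mathfrak{m}_B$, so $(G,K)$ is a compact irreducible symmetric pair; concretely, the three resulting triples are the fibrations of $W^6$, $W^{12}$, $W^{24}$ over $\mathbb{CP}^2=SU(3)/U(2)$, $\mathbb{HP}^2=Sp(3)/(Sp(2)\times Sp(1))$, $\mathbb{OP}^2=F_4/Spin(9)$, with fibers $S^2$, $S^4$, $S^8$ respectively. Now assume, toward a contradiction, that $F$ is not a $G$-normal homogeneous Riemannian metric. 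Then statement (1) of Theorem \ref{Theorem 3} holds, hence so does statement (4): for every $v_F\in\mathfrak{m}_F$ and $v_B\in\mathfrak{m}_B$ there is $u\in\mathfrak{h}$ satisfying (\ref{0005}). This is exactly Tamaru's Condition I for the symmetric-pair triple $(H,K,G)$. But none of these three triples appears in Tamaru's classification Table \ref{TableI}: there is no entry with $\mathfrak{g}=\mathfrak{f}_4$, while the entries with $\mathfrak{g}=\mathfrak{su}(3)$ (namely (1.5) with $n=2$) and with $\mathfrak{g}=\mathfrak{sp}(3)$ (namely (1.8) and (1.9) with $n=2$) all have $\mathfrak{h}$ of strictly larger dimension than $\mathfrak{t}^2=\mathrm{Lie}(T^2)$, resp. $\mathfrak{sp}(1)^3$. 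Hence none of them satisfies Condition I, a contradiction. Therefore the profile function of $F$ is constant, i.e. $F$ is a $G$-normal homogeneous Riemannian metric.

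The main obstacle is the structural bookkeeping that makes Section 5 applicable here: verifying that the only admissible two-summand decomposition of $\mathfrak{m}$ is the $1+2$ grouping (using the pairwise inequivalence of the Wallach summands, together with $[\mathfrak{m}_i',\mathfrak{m}_i']\subset\mathfrak{h}$ which produces the intermediate subgroup $K$), checking that the $\varphi\leftrightarrow\psi$ re-presentation puts $F$ in the canonical form of a metric associated with the triple without affecting whether it is normal Riemannian, and confirming that $(G,K)$ is genuinely a compact irreducible symmetric pair so that Tamaru's list governs Condition I; granting all this, the exclusion from Table \ref{TableI} is a one-line inspection. As an alternative to invoking \cite{T}, one may disprove Condition I for these three triples directly: fixing a generic $v_F\in\mathfrak{m}_1'$ confines any admissible $u$ to the proper subalgebra $\mathfrak{z}_\mathfrak{h}(v_F)=\{u\in\mathfrak{h}:[u,v_F]=0\}$, and then for generic $v_B=v_2+v_3$ with $v_2\in\mathfrak{m}_2'$, $v_3\in\mathfrak{m}_3'$ the two components of (\ref{0005}) read $[u,v_2]=-[v_F,v_3]$ in $\mathfrak{m}_2'$ and $[u,v_3]=-[v_F,v_2]$ in $\mathfrak{m}_3'$, an over-determined system admitting no common solution, which one checks case by case by a short dimension count.
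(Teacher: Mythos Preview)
Your proof is correct but follows a genuinely different route from the paper's.

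The paper does not invoke Theorem \ref{Theorem 3} or Tamaru's list at all. Instead it argues directly: writing the decomposition (up to permutation) as $\mathfrak{m}=(\mathfrak{m}_1+\mathfrak{m}_2)+\mathfrak{m}_3$ with $F(u)=|u|_{\mathrm{bi}}\varphi(|u_3|_{\mathrm{bi}}/|u|_{\mathrm{bi}})$, it restricts the norm to $\tilde{\mathfrak{m}}=\mathfrak{m}_2+\mathfrak{m}_3$ and uses the g.o.\ criterion (Theorem \ref{Theorem 1}) together with a short fundamental-tensor computation to show that $\tilde F=F|_{\tilde{\mathfrak{m}}}$ is $\mathrm{Ad}(K_1)$-invariant, where $\mathfrak{k}_1=\mathfrak{h}+\mathfrak{m}_1$. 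Since $G/K_1$ is one of the rank-one projective planes $\mathbb{C}\mathrm{P}^2$, $\mathbb{H}\mathrm{P}^2$, $\mathbb{O}\mathrm{P}^2$, the isotropy action is transitive on unit spheres, so any $K_1$-invariant Minkowski norm on $\tilde{\mathfrak{m}}$ is Euclidean; hence $\varphi$ is constant and $F$ is normal Riemannian.

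Your approach instead recognises $F$ as a standard $(\alpha_1,\alpha_2)$-metric associated with the triple $(H,K,G)$ with $\mathfrak{m}_F=\mathfrak{m}_1'$ and $\mathfrak{m}_B=\mathfrak{m}_2'+\mathfrak{m}_3'$, applies the implication $(1)\Rightarrow(4)$ of Theorem \ref{Theorem 3} in contrapositive, and then notes that $(G,K)$ is an irreducible symmetric pair so that $(4)$ is exactly Tamaru's Condition I; the three Wallach triples $(\mathfrak{t}^2,\mathfrak{u}(2),\mathfrak{su}(3))$, $(\mathfrak{sp}(1)^3,\mathfrak{sp}(1)\oplus\mathfrak{sp}(2),\mathfrak{sp}(3))$, $(\mathfrak{so}(8),\mathfrak{so}(9),\mathfrak{f}_4)$ are absent from Table \ref{TableI}, giving the contradiction. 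This is logically sound and arguably closer to the paper's own narrative, since Theorem \ref{Theorem 3} was designed precisely to convert the g.o.\ question into Condition I. The trade-off is that your argument rests on the \emph{completeness} of Tamaru's classification \cite{T}, whereas the paper's proof is self-contained and, as a by-product, exhibits the mechanism (the induced $K_1$-invariance on the projective-plane base forces $\varphi\equiv\mathrm{const}$) rather than a table lookup. Your closing alternative, disproving (\ref{0005}) directly by a centralizer/dimension count, would recover self-containment and is a reasonable addendum.
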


\begin{proof} We
fix a bi-invariant inner product $|\cdot|_{\mathrm{bi}}^2=\langle\cdot,\cdot\rangle_{\mathrm{bi}}$ on $\mathfrak{g}$, which
is unique up to a scalar multiplication.
We have the $\mathrm{Ad}(H)$-invariant bi-invariant decomposition
$$\mathfrak{g}=\mathfrak{h}+\mathfrak{m}=
\mathfrak{h}+\mathfrak{m}_1+\mathfrak{m}_2+
\mathfrak{m}_3,$$
which satisfies
\begin{eqnarray*}
& &[\mathfrak{h},\mathfrak{m}_i]=\mathfrak{m}_i\mbox{ and } [\mathfrak{m}_i,\mathfrak{m}_i]\subset\mathfrak{h},
\mbox{ for each }i, \mbox{ and}\\
& &[\mathfrak{m}_i,\mathfrak{m}_j]\subset
\mathfrak{m}_k,\mbox{ when }
\{i,j,k\}=\{1,2,3\}.
\end{eqnarray*}
Furthermore, for any $i$, $(\mathfrak{g},\mathfrak{k}_i)$ is a rank-one compact symmetric pair, where $\mathfrak{k}_i=\mathfrak{h}+\mathfrak{m}_i$.

Since $\mathfrak{m}_1$, $\mathfrak{m}_2$ and $\mathfrak{m}_3$ are
distinct $\mathrm{Ad}(H)$-representations, any standard homogeneous $(\alpha_1,\alpha_2)$-metric $F$ on $G/H$ must be associated with the decomposition $\mathfrak{m}=\mathfrak{m}_{1,2}+\mathfrak{m}_{3}$, in which
$\mathfrak{m}_{1,2}=\mathfrak{m}_1+\mathfrak{m}_2$, up to a permutation for $\mathfrak{m}_i$'s. We assume the standard homogeneous $(\alpha_1,\alpha_2)$-metric is determined
by the norm $F(u)=|u|_{\mathrm{bi}}\varphi(|u_3|_{\mathrm{bi}}/
|u|_{\mathrm{bi}})$, where $u=u_1+u_2+u_3\neq0$ with
$u_i\in\mathfrak{m}_i$ for each $i$.

Denote $\tilde{\mathfrak{m}}=\mathfrak{m}_2+\mathfrak{m}_3$, and $\tilde{F}=F|_{\tilde{\mathfrak{m}}}$. Then $\tilde{F}$ is a Minkowski norm on $\tilde{\mathfrak{m}}$ which can be presented as
$\tilde{F}(u)=|u|_{\mathrm{bi}}\varphi(|u_3|_{\mathrm{bi}}
/|u|_{\mathrm{bi}})$,
where $\varphi$ is the same function in the presentation of $F$,
and $u=u_2+u_3\neq0$ with $u_2\in\mathfrak{m}_2$ and $u_3\in\mathfrak{m}_3$ respectively.

Let $K_1$ be the compact connected subgroup of $G$ generated by
$\mathfrak{k}_1=\mathfrak{h}+\mathfrak{m}_1$. Then we claim

{\bf Claim:} $\tilde{F}$ defines a homogeneous $(\alpha_1,\alpha_2)$-metric
on $G/K$.

To prove this claim, we consider any nonzero $\tilde{v}\in\tilde{\mathfrak{m}}\backslash\{0\}$, and any vector
$v_1\in\mathfrak{m}_1$.
By the $G$-g.o. assumption for $F$, we can find $u\in\mathfrak{h}$,
such that
\begin{eqnarray*}
g^F_{\tilde{v}}(\tilde{v},[\tilde{v}+u,v_1])=
g^F_{\tilde{v}}( \tilde{v},[\tilde{v}+u,v_1]_{\mathfrak{m}})=0.
\end{eqnarray*}

Direct calculation shows
$g^F_{\tilde{v}}(\tilde{v},\mathfrak{m}_1)=0$.
So $g^F_{\tilde{v}}(\tilde{v},[u,v_1])=0$, and thus
\begin{eqnarray}
g^{\tilde{F}}_{\tilde{v}}(\tilde{v},[v_1,\tilde{v}])
=g^{{F}}_{\tilde{v}}(\tilde{v},[v_1,\tilde{v}])
=g_{\tilde{v}}^F(\tilde{v},[u,v_1])=0,\label{0016}
\quad \forall v_1\in\mathfrak{m}_1.
\end{eqnarray}

Since the norm $F$ is $\mathrm{Ad}(H)$-invariant,
we have
\begin{eqnarray}\label{0017}
g_{\tilde{u}}^{\tilde{F}}(\tilde{v},[u',\tilde{v}])=0,
\quad\forall u'\in\mathfrak{h}.
\end{eqnarray}

Summarizing (\ref{0016}) and (\ref{0017}), we get
\begin{eqnarray*}
g^{\tilde{F}}_{\tilde{v}}(\tilde{v},
[\mathfrak{k}_1,\tilde{v}])=0,\,\forall \tilde{v}\in\tilde{\mathfrak{m}}\backslash\{0\},
\end{eqnarray*}
so $\tilde{F}$ is $\mathrm{Ad}(K_1)$-invariant, which
defines a homogeneous $(\alpha_1,\alpha_2)$-metric on $G/K_1$.

This argument proves our claim.

The compact coset space $G/K_1$ is one of following three
compact rank-one symmetric spaces, $\mathbb{C}\mathrm{P}^2=SU(3)/S(U(1)U(2))$,
$\mathbb{H}\mathrm{P}^2=Sp(3)/Sp(1)Sp(2)$ and
$\mathbb{O}\mathrm{P}^2=F_4/Spin(9)$. So the homogeneous
metric $\tilde{F}$ on $G/K_1$ must be a $G$-homogeneous Riemannian metric, i.e. the function $\varphi$ in its presenting is a constant function. Since the function $\varphi$ is also
used in the presentation $F$ as a standard homogeneous $(\alpha_1,\alpha_2)$-metric, $F$ must be a $G$-normal homogeneous metric as well.

This ends the proof of the theorem.
\end{proof}

Finally, we remark that
in \cite{AW} the authors prove that homogeneous Riemannian metrics
on generalized Wallach spaces are g.o. if and only if they are
normal homogeneous Riemannian metrics. Theorem \ref{last-thm}
implies their theorem might be generalized to Finsler geometry as well.

\end{document}